\numberwithin{equation}{section}
\newtheorem{theorem}{Theorem}[section]
\newtheorem{proposition}[theorem]{Proposition}
\newtheorem{dfn}[theorem]{Definition}
\newtheorem{lemma}[theorem]{Lemma}
\newtheorem{remark}[theorem]{Remark}
\newcommand{\RR}{\mathbb{R}}
\newcommand{\NN}{\mathbb{N}}
\newcommand{\PP}{\mathbb{P}}
\newcommand{\EE}{\mathbb{E}}
\newcommand{\bs}{\boldsymbol}
\newcommand{\cec}{}
\DeclareMathOperator{\Var}{Var}
\DeclareMathOperator{\inte}{int}
\def\ba#1\ea{\begin{linenomath}\begin{align*}#1\end{align*}\end{linenomath}}
\def\ban#1\ean{\begin{linenomath}\begin{align}#1\end{align}\end{linenomath}}
\begin{document}

\title{Large deviations principle for a stochastic process with random reinforced relocations}
\author{Erion-Stelios Boci\thanks{Department of Mathematical Sciences, University of Bath, Claverton Down, BA2 7AY Bath, UK.\newline Email: e.boci/c.mailler@bath.ac.uk} \and C\'ecile Mailler$^*$\thanks{CM is grateful to EPSRC for support through the fellowship EP/R022186/1.}}
\date{}
\maketitle

\begin{abstract}
Stochastic processes with random reinforced relocations have been introduced 
in a series of papers by Boyer and co-authors (Boyer and Solis Salas 2014, Boyer and Pineda 2016, Boyer, Evans and Majumdar 2017) to model animal foraging behaviour. 
Such a process evolves as a Markov process, except at random relocation times, 
when it chooses a time at random in its whole past according to some ``memory kernel'', 
and jumps to its value at that random time.

We prove a quenched large deviations principle for the value of the process at large times.
The difficulty in proving this result comes from the fact that the 
process is not Markov because of the relocations. Furthermore,
the random inter-relocation times act as a random environment.
\end{abstract}

\section{Introduction}

Consider a discrete-time process on $\mathbb Z^d$ that evolves as a simple random walk, except at some random ``relocation'' times when it chooses a time uniformly at random in its past, and jumps to the site where it was at that time. The waiting times between relocation times, which we call ``run-lengths'' are i.i.d.\ geometric random variable of a given parameter $q\in(0,1)$. This process was first introduced by Boyer and Solis-Salas \cite{BSS14} as a model for animal foraging behaviour; they argue in particular that this model applies to monkeys foraging in their natural environment.

Boyer and Solis-Salas~\cite{BSS14} showed that, when the increments of the simple random walk between relocation times have finite second moment, the process satisfies a central limit theorem with variance~$\log n$, which they called ``anomalous''. Boyer and Pineda~\cite{BP16} generalised this result to the case when the simple random walk increments are heavy-tailed, and Boyer, Evans and Majumdar~\cite{BEM17} allowed the memory to be biased, i.e.\ at relocation times, the walker is more likely to jump back to a site it visited recently than to a site it visited a long time ago.

Mailler and Uribe Bravo~\cite{MUB18} unified all these variants of the original ``monkey walk'' 
of Boyer and Solis-Salas~\cite{BSS14} under an over-arching framework, 
in which the simple random walk can be replaced by any ``ergodic'' 
(in a more general sense than usual) discrete- or continuous-time Markov process, 
and where the run-lengths can have any distribution. 
Their result also holds for a large class of ``memory kernels'',
where the memory kernel is a function that models the bias of the memory.
Under an 8-th moment assumption for the run-length distribution, 
and assuming that the underlying Markov process (between relocation times) 
satisfies a central limit theorem, 
they show a central limit theorem for the position of the walker at large times, 
and thus confirm and generalise the results of~\cite{BSS14, BP16, BEM17}. 
The approach introduced by Mailler and Uribe Bravo~\cite{MUB18} is very different from 
the Fourier calculus approach developed in~\cite{BSS14, BP16, BEM17}, and this 
probabilistic approach is the key to allowing for a more general framework.

\medskip
{\bf Our contributions:}
Our main result (Theorem~\ref{ThLarDevMW}) is a large deviation principle for the position of the walker at large times.
We do this for the general framework of~\cite{MUB18}, i.e.\ we allow the underlying process  
to be any Markov process that satisfies some large deviations principle, 
and allow the run-length distribution to be arbitrary as long as it satisfies some (exponential) 
moment assumption. 
We also treat a large class of memory kernels: we cover all cases treated by~\cite{MUB18}.

The difficulty comes from two factors: (a) the process is non Markovian, and (b) the run-length act as a random environment. Our result is quenched in the sense that it holds conditionally 
on the sequence of run-lengths. It holds almost surely for the ``steep'' kernels of~\cite{MUB18}, and in probability for the ``flat'' ones.
The discrepancy between steep and flat memory kernels is due 
to a renewal theory argument that holds only in a weak sense in the flat case.

Our proof relies on applying G\"artner-Ellis' large deviation theorem (see, e.g.\ \cite[Chapter~V2]{H08}); proving that the assumptions of the theorem hold in our context is the bulk of our paper, 
and this relies on precise estimates of sums of random variables (the additional level of randomness due to the random run-length makes this slightly technical). 

\medskip
In the rest of this introduction, we state and discuss our results.

\subsection{Definition of the model} 
We consider time to be either discrete or continuous, 
i.e.\ $\mathcal T=\NN$ or $\mathcal T=[0,\infty)$. 
The ``monkey process'' $X=(X(t))_{t\in \mathcal T}$
is an $\mathbb R^d$-valued stochastic process that depends on three parameters: 
a semi-group $P=(P_t)_{t\in \mathcal T}$ on $\mathbb R^d$, 
a probability distribution $\phi$ on $\mathcal T$ such that $\phi(\{0\})=0$,
called the ``run-length distribution", 
and a function $\mu : \mathcal T\to \RR^+$ called the ``memory kernel".
Let $Z$ be  a Markov process of semi-group $P$ on $\RR^d$. For all $x\in \RR^d$, we denote by $\PP_x$ the law of $Z$ started at $x$, and by $\PP_x^t$ the law of $(Z(s))_{s<t}$ under $\PP_x$.
Let $\bs L=(L_i)_{i\in \NN}$ be a sequence of $\mathcal T$-valued i.i.d.\ random variables with distribution~$\phi$. We let $T_0=0$ and, for all $n\geq 1$, $T_n=\sum_{i=1}^n L_i$. We call these random times the ``relocation times" of the monkey process.

The monkey process $X=(X(t))_{t\in \mathcal T}$ is defined recursively as follows: 
Fix $X(0)\in\mathbb R^d$ and draw $(X(s))_{s<T_1}$ at random according to $\PP_0^{T_1}$. For all $n\geq 1$, given $(X(s))_{s<T_n}$,
\begin{itemize}
\item[(i)] draw a random variable $R_{n+1}$ in $[0,T_n)$ according to the distribution 
\begin{linenomath}\begin{equation}\label{eq: definition of Rn} 
\PP(R_{n+1}\leq x|T_n)=\frac{\int_0^x \mu (u)\; \mathrm{d}u}{\int_0^{T_n} \mu (u)\; \mathrm{d}u},
\end{equation}\end{linenomath}
\item[(ii)] set $V_{n+1}=X(R_{n+1})$ and draw $(X(s))_{T_n\leq s< T_{n+1}}$ according to $\PP_{V_{n+1}}^{L_{n+1}}$.
\end{itemize}
This defines the process $X=(X(t))_{t\in \mathcal T}$, which we call the
monkey process of semigroup $P$, 
run-length distribution $\phi$ and memory kernel $\mu$.

\begin{figure}[ht]
\centering
\includegraphics[width=10cm,page=12]{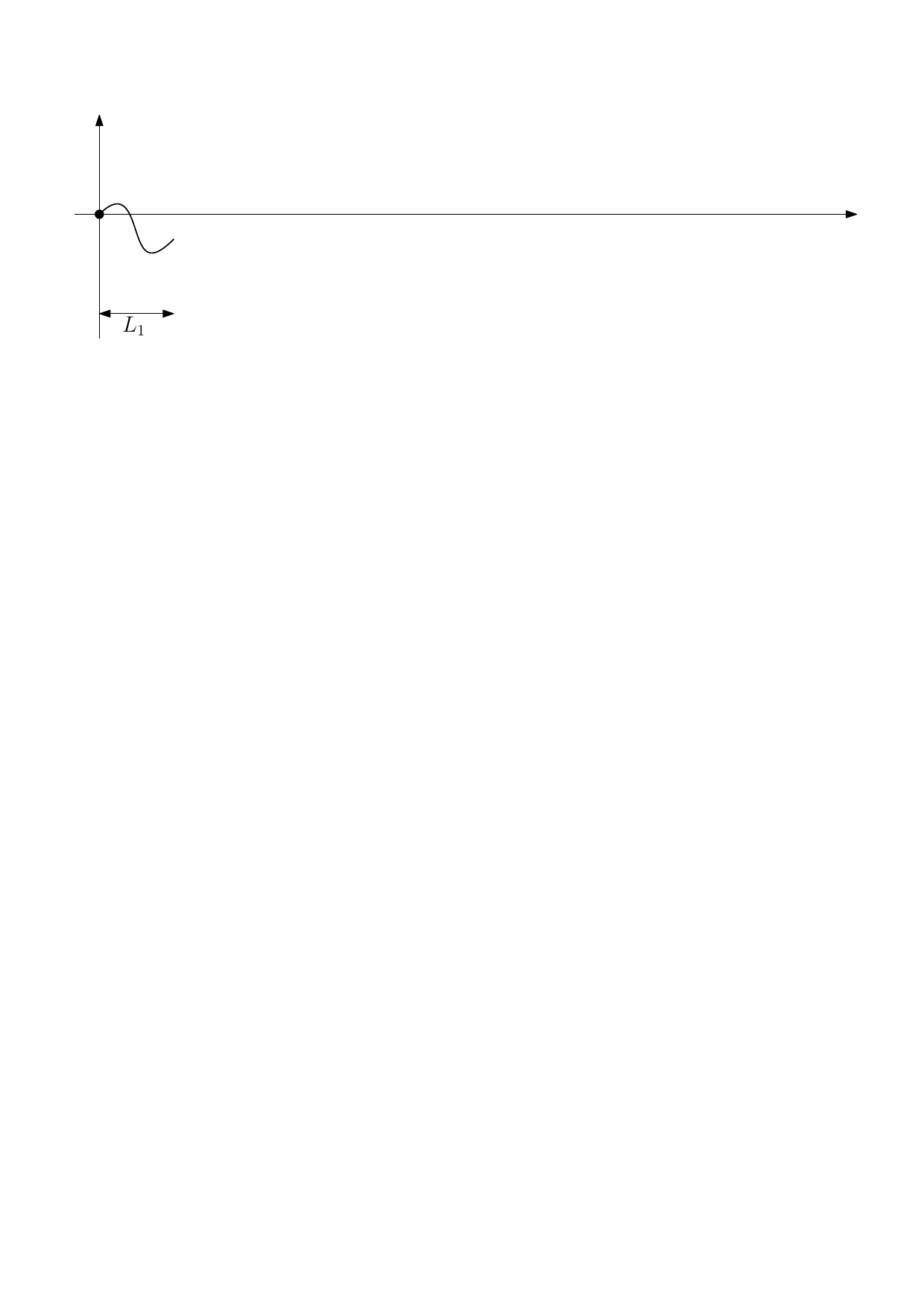}
\caption{A realisation of the monkey walk. The grey arrows point from $X(R_{n+1})$ to $X(T_n)$ for each $n\geq 1$.}\label{fig:def}
\end{figure}

If the memory kernel $\mu$ is uniform, then, at each relocation time, the walker chooses a time uniformly in its past, and relocates to where it was at that random time. This is the original process introduced by Boyer and Solis-Salas \cite{BSS14}. The case of more general memory kernels was introduced by Boyer, Evans and Majumdar~\cite{BEM17} and studied by Mailler and Uribe Bravo~\cite{MUB18}.
As in \cite{MUB18}, we consider the following two families of memory kernels:  for all $x\in\mathcal T$,
\begin{linenomath}\begin{equation}\label{eq: definition of mu1}
\mu_1(x)=\frac{\alpha}{x}(\log x)^{\alpha -1}\mathrm{e}^{\beta (\log x)^\alpha},
\end{equation}\end{linenomath}
where $\alpha>0$ and $\beta\geq 0$,
and 
\begin{linenomath}\begin{equation}\label{eq: definition of mu2}
\mu_2(x)=\gamma \delta x^{\delta -1}\mathrm{e}^{\gamma x^{\delta}},
\end{equation}\end{linenomath}
where $\gamma>0$ and $\delta\in (0,\nicefrac{1}{2}]$.

\begin{remark}
Note that the case $\mu=\mu_1$ with $\alpha=\beta=1$ corresponds to the uniform memory case, 
i.e., for all $n\geq 1$, the random time $R_{n+1}$ is chosen uniformly in the interval $[0, T_n)$. 
Some of the memory kernels (e.g., $\mu_1$ with $\alpha = 1$ and $\beta<1$) are decreasing: 
the walker is more likely to relocate to positions it visited near time~0 
than positions it visited recently. 
Others, such as $\mu_2$, are increasing: 
the walker is more likely to relocate to positions visited more recently.
\end{remark}

\begin{remark}
Note that, when $\beta = 0$, $\mu_1$ is not integrable at zero, which causes problems 
when defining the relocation times as random variable of probability density proportional to $\mu$.
To fix this issue, one can re-define $\mu_1$ as $\mu_1(x) =\frac\alpha{x+1}(\log (x+1))^{\alpha-1}$, for all $x>0$. For simplicity of notation, and since it does not affect the final results, we keep $\mu_1$ defined as it is.
\end{remark}

\begin{remark}
Although the model can also be defined for $\mu=\mu_2$ and $\delta>\nicefrac12$, the central limit theorem of~\cite{MUB18} as well as our main results do not hold in that case, 
which corresponds to a memory kernel that is very ``steep'', 
i.e.\ the walker is likely to relocate to positions it visited in its very recent past.
In a forthcoming paper, we prove a CLT for the position of the walker at large times in the monkey random walk defined with this very steep memory kernel.
\end{remark}

\subsection{Statement of the results}
We prove that, under some assumptions on the parameters of the monkey process $X$,
conditionally on $\bs L = (L_i)_{i\geq 1}$, 
a large deviation principle holds for $X(t)/s(t)$ almost surely, 
where, for all $x>0$,
\begin{linenomath}\begin{equation}\label{eq:st}s(x)=\begin{cases}
(\log x)^{\alpha} &\text{if}\;\mu=\mu_1, \beta\neq 0 \\
\alpha\log\log x &\text{if}\;\mu=\mu_1,\beta=0 \\
\gamma x^{\delta} &\text{if}\;\mu=\mu_2.
\end{cases}\end{equation}\end{linenomath}
Note that $s = \log (\int \mu)$ (up to a constant factor $\beta$ in the case when $\mu = \mu_1$ and $\beta\neq 0$).

Our assumptions are
\begin{itemize}
\item[{\bf (A1)}] One of the following holds: 
\begin{itemize}
\item[{\bf (A1a)}] $\mu =\mu_1$, $\beta\neq 0$ and $\alpha\geq 1$,
or $\mu = \mu_2$ and $\delta\leq \nicefrac12$ (case when $s(x)\geq \log x$ for all $x>0$),
\item[{\bf (A1b)}] $\mu =\mu_1$, $\beta=0$ or $\alpha< 1$ (case when $s(x)<\log x$ for all $x>0$).
\end{itemize}
\item[{\bf (A2)}] There exists a differentiable function $\Lambda_Z : \RR^d\to{\cec [0,\infty]}$ such that, 
for all $\zeta, x\in \RR^d$,
\begin{linenomath}\begin{equation}\label{eq: ZLDPcondition}
\lim_{t\to+\infty}\frac1t \log \EE_x[\mathrm{e}^{\zeta \cdot Z(t)}]=\Lambda_Z(\zeta),
\end{equation}\end{linenomath}
where $\zeta\cdot Z(t)$ is the scalar product of $\zeta$ and $Z(t)\in\mathbb R^d$.
Moreover, for all $t>0$ and $\zeta\in\mathbb R^d$, 
\begin{equation}\label{eq:upper_bound_Z}
\sup_{0\leq u\leq t}\mathbb E_x[\mathrm e^{\zeta\cdot Z(u)}]<+\infty.
\end{equation}
(Recall that $Z$ is the Markov process of semi-group $P$ and started at $x$ on $\mathbb R^d$.)
\item[{\bf (A3)}] {\cec If, for all $\xi\in \mathbb R$,
\begin{linenomath}\begin{equation}\label{eq:def_Lambda}
\Lambda(\xi):=
\frac{\EE[{\mathrm{e}^{\xi L}-1-\xi L}]}{\xi\EE[L]} 
\end{equation}\end{linenomath} 
where $L$ is a $\phi$-distributed random variable,
and if $\mathcal D = \{\zeta\in\mathbb R^d \colon \Lambda \circ \Lambda_Z(\zeta)<\infty\}$,
then, $\Lambda$ is continuous and
\begin{itemize}
\item[(o)] $0 \in \mathrm{Int}(\mathcal D)$ ($\mathrm{Int}(\mathcal D)$ denotes the interior of $\mathcal D$),
\item[(i)] $\Lambda \circ \Lambda_Z$ is lower semi-continuous on $\mathbb R^d$,
\item[(ii)] $\Lambda \circ \Lambda_Z$ is differentiable on $\mathrm{Int}(\mathcal D)$,
\item[(iii)] either $\mathcal D = \mathbb R^d$ or $\lim_{\zeta \to \partial D} |\Lambda \circ \Lambda_Z(\zeta)| = \infty$.
\end{itemize}}
\end{itemize}

\begin{theorem}[Quenched large deviations principle]\label{ThLarDevMW}
Let $X$ be the monkey process of semi-group~$P$, run-length distribution $\phi$ and memory kernel $\mu$.
Under Assumptions {\rm {\bf (A1-3)}}, for all $X(0)\in\mathbb R^d$, 
for all $x>0$,
\begin{linenomath}\begin{equation}\label{eq:Result}
-\inf_{\|y\|>x} \Lambda_X^{\!*}(y)
\leq 
\liminf_{t\to+\infty} \frac{\log\mathbb P(\|X(t)\|> xs(t)\mid \bs L)}{s(t)} 
\leq \limsup_{t\to+\infty} \frac{\log\mathbb P(\|X(t)\|\geq xs(t)\mid \bs L)}{s(t)}
= -\inf_{\|y\|\geq x} \Lambda_X^{\!*}(y),
\end{equation}\end{linenomath}
where $s(t)$ is defined in Equation \eqref{eq:st}, $\Lambda^*_X$ is the Legendre transform of $\Lambda_X=\Lambda\circ \Lambda_Z$, and $\|\!\cdot\!\|$ is the $L^2$-norm on $\mathbb R^d$.
Equation~\eqref{eq:Result} holds almost surely under {\rm\bf(A1a)} and in probability under {\rm\bf(A1b)}.
\end{theorem}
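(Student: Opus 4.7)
My plan is to apply the G\"artner--Ellis theorem at speed $s(t)$, so the heart of the proof is to establish that, conditionally on the environment $\bs L$, the normalised log-Laplace transform
\[
\Lambda_X(\lambda):=\lim_{t\to+\infty}\frac{1}{s(t)}\log\EE\bigl[\mathrm{e}^{\lambda\cdot X(t)}\bigm|\bs L\bigr]
\]
exists for every $\lambda\in\RR^d$ and equals $\Lambda(\Lambda_Z(\lambda))$, with the limit holding almost surely under \textbf{(A1a)} and in probability under \textbf{(A1b)}. Differentiability of $\Lambda_X=\Lambda\circ\Lambda_Z$ is inherited from that of $\Lambda$ (ensured by \textbf{(A2)}) and $\Lambda_Z$ (by \textbf{(A3)}), and G\"artner--Ellis then yields both bounds in~\eqref{eq:Result}; in particular the equality $\limsup=-\inf_{\|y\|\geq x}\Lambda_X^{\!*}$ on the closed set is the extra consequence of differentiability of $\Lambda_X$.

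\textbf{Recursion and identification of the limit.} The analytic core is a closed recursion for $\phi_t(\lambda):=\EE[\mathrm{e}^{\lambda\cdot X(t)}\mid\bs L]$. For $t\in[T_n,T_{n+1})$, writing $X(t)=V_{n+1}+Z_{n+1}(t-T_n)$ with $Z_{n+1}$ an independent copy of $Z$ and $V_{n+1}=X(R_{n+1})$, and conditioning on $R_{n+1}$ (whose density given $\bs L$ is $\mu/G_{T_n}$ on $[0,T_n)$) yields
\[
\phi_t(\lambda)=\psi(t-T_n,\lambda)\,\frac{F_{T_n}(\lambda)}{G_{T_n}},\quad \psi(u,\lambda):=\EE_0[\mathrm{e}^{\lambda\cdot Z(u)}],\ F_t:=\!\int_0^t\!\phi_u(\lambda)\mu(u)\,\mathrm du,\ G_t:=\!\int_0^t\!\mu(u)\,\mathrm du.
\]
Differentiating $F_t/G_t$ on each renewal interval, Taylor expanding $\log(1+\cdot)$ (valid because $G_{T_k}\to\infty$), and telescoping reduces the problem --- up to a boundary term $\log\psi(t-T_n,\lambda)=O(L_{n+1})$ and an $O(1)$ initial contribution --- to analysing $\sum_{k<n}G_{T_k}^{-1}\!\int_0^{L_{k+1}}\mu(T_k+u)(\psi(u,\lambda)-1)\,\mathrm du$. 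Since $s=\log(\int\mu)$ up to a multiplicative constant, $\mu/G\propto s'$, and because $L_{k+1}=O(\log k)$ almost surely by \textbf{(A2)} while $T_k\to\infty$, the ratio $\mu(T_k+u)/G_{T_k}$ is uniformly close to $s'(T_k)$ on $u\in[0,L_{k+1}]$. The sum thus collapses to $\sum_{k<n}s'(T_k)W_k$ with $W_k:=\int_0^{L_{k+1}}(\psi(u,\lambda)-1)\,\mathrm du$ i.i.d.\ and independent of $T_k$. Using Fubini together with \textbf{(A3)} (the uniform bound \eqref{eq:upper_bound_Z} dominating $\psi$ so that one may replace it by $\mathrm{e}^{u\Lambda_Z(\lambda)}$ in the limit) identifies $\EE[W_1]=\EE[L]\,\Lambda(\Lambda_Z(\lambda))$, while $T_k/k\to\EE[L]$ and a Riemann approximation give $\sum_{k<n}s'(T_k)\sim s(t)/\EE[L]$; a weighted law of large numbers then yields $\sum s'(T_k)W_k\sim\Lambda(\Lambda_Z(\lambda))\,s(t)$, i.e.\ $\Lambda_X(\lambda)=\Lambda(\Lambda_Z(\lambda))$.

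\textbf{Main obstacle.} The principal technical difficulty is to render each of the above approximations uniform in the environment $\bs L$ and to pin down the mode of convergence. Four error sources must be shown to be $o(s(t))$ in the appropriate sense: the boundary term $O(L_{n+1})$, the variation of $\mu$ within a renewal interval, the asymptotic-only replacement of $\psi(u,\lambda)$ by $\mathrm{e}^{u\Lambda_Z(\lambda)}$, and the quenched fluctuations of the weighted i.i.d.\ sum around its mean. Under \textbf{(A1a)}, $s(t)\geq\log t$ and the exponential moments from \textbf{(A2)} give $L_{n+1}=O(\log n)$ almost surely via Borel--Cantelli, so all errors are a.s.\ negligible; under \textbf{(A1b)}, $s(t)$ is sub-logarithmic and on events of slowly decaying probability $L_{n+1}$ can be of order $s(t)$ or larger, so the boundary error only vanishes in probability, which is the precise source of the weaker mode of convergence stated in the theorem.
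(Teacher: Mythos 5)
Your overall skeleton (G\"artner--Ellis at speed $s(t)$, identification of the quenched limit $\frac{1}{s(t)}\log\EE[\mathrm e^{\lambda\cdot X(t)}\mid\bs L]\to\Lambda(\Lambda_Z(\lambda))$, and attributing the almost-sure versus in-probability dichotomy to the boundary term coming from the current run) matches the paper, but the core of your argument has a genuine gap. Your recursion $\phi_t(\lambda)=\psi(t-T_n,\lambda)\,F_{T_n}(\lambda)/G_{T_n}$ rests on writing $X(t)=V_{n+1}+Z_{n+1}(t-T_n)$ with $Z_{n+1}$ started at $0$ and independent of $V_{n+1}$. This factorisation of the conditional Laplace transform requires $Z$ to have stationary, spatially homogeneous increments (a random walk or L\'evy-type process); the theorem, however, is stated for an arbitrary Markov semigroup satisfying {\bf(A3)}, for which $\EE_x[\mathrm e^{\lambda\cdot Z(u)}]$ depends on the starting point $x$ and the recursion does not close on $\phi_t(\lambda)$ alone. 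The paper avoids this by using only the Markov property: $X(t)=Z(S(t))$ in distribution with $S$ independent of $Z$ and $S(t)=A(t)+\sum_i F_i\ind_{i\prec i(t)}$, it proves the G\"artner--Ellis limit for $S(t)$ at speed $s(t)$ (via Proposition~\ref{prop:dobr}, which makes the indicators $\ind_{i\prec i(t)}$ conditionally independent Bernoullis with parameters $W_i/S_i$), and only then composes with the asymptotics of $\EE_x[\mathrm e^{\zeta\cdot Z(u)}]$ evaluated at the diverging time $u=S(t)$, where {\bf(A3)} genuinely applies.

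Even granting homogeneity, your identification $\EE[W_1]=\EE[L]\,\Lambda(\Lambda_Z(\lambda))$ is not justified as stated: $W_1=\int_0^{L}(\psi(u,\lambda)-1)\,\mathrm du$ involves $\psi(u,\lambda)$ at times $u\le L$ of order one, and neither the limit in \eqref{eq: ZLDPcondition} nor the bound \eqref{eq:upper_bound_Z} lets you replace $\psi(u,\lambda)$ by $\mathrm e^{u\Lambda_Z(\lambda)}$ there --- the run lengths do not grow, so there is no ``in the limit'' to invoke. Your scheme would produce the constant $\frac{1}{\EE L}\EE\bigl[\int_0^L(\psi(u,\lambda)-1)\,\mathrm du\bigr]$, which equals $\Lambda(\Lambda_Z(\lambda))$ only when $\psi(u,\lambda)=\mathrm e^{u\Lambda_Z(\lambda)}$ exactly, i.e.\ precisely in the L\'evy/random-walk case --- which is also the only case in which your recursion is valid in the first place. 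So, once its approximations are made rigorous, your argument proves the theorem for processes with i.i.d.\ increments, but it neither applies to, nor identifies the correct rate function for, the general Markov setting of the statement. (Your remaining technical programme --- $L_{n+1}=o(\log n)$ a.s.\ via Borel--Cantelli, uniformity of $\mu(T_k+u)/G_{T_k}\approx s'(T_k)$ within a run, and law-of-large-numbers control of the weighted sums --- does track the paper's Lemma~\ref{A(t) as} and Lemmas~\ref{Le: AsymptoticExpe}--\ref{Le: AsymptMoment}, and the source you give for the a.s./in-probability distinction is the right one.)
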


\begin{remark}
Unfortunately, we are not able to prove that
for all $x>0$,
\begin{linenomath}\begin{equation}\label{eq:annealed}
-\inf_{\|y\| >x} \Lambda_X^{\!*}(y)
\leq 
\liminf_{t\to+\infty} \frac{\log\mathbb P(\|X(t)\|> xs(t))}{s(t)} 
\leq \limsup_{t\to+\infty} \frac{\log\mathbb P(\|X(t)\|\geq xs(t))}{s(t)}
= -\inf_{\|y\|\geq x} \Lambda_X^{\!*}(y),
\end{equation}\end{linenomath}
i.e.\ an annealed version of Theorem~\ref{ThLarDevMW}.
The lower bound in~\eqref{eq:annealed} is a straightforward consequence of the lower bound  
in Theorem~\ref{ThLarDevMW} and Jensen's inequality (since $\log$ is concave).
However, the upper bound is not covered by our results, 
and our method does not seem to be applicable to that case. 
Because it also takes into account the fact that run-length 
could become exceptionally long on some rare event, it is not surprising that the annealed 
large deviations principle is more difficult to prove than the quenched one.
{\cec We still believe that Theorem~\ref{ThLarDevMW} would hold in the annealed sense, at least for run-lengths distributions with light enough tails.}
\end{remark}

This result provides a large deviations principle for the monkey process. It complements the results of~\cite{MUB18} who proved that {\it if $Z$ satisfies a central limit theorem and $\phi$ some moment assumptions then $X$ also satisfies a central limit theorem.} Similarly, although the proofs are more involved, as often for large deviations principles, we prove that {\it if $Z$ satisfies a large deviation principle and $\phi$ some moment assumptions then $X$ also satisfies a large deviations principle.}

In the case of the uniform memory kernel, we get that the large deviations decay as a negative power of time. This behaviour is quite unusual since one would typically get large deviations that decay exponentially in time. This behaviour is not surprising though, since, by~\cite{MUB18}, the monkey process with uniform memory kernel diffuses at logarithmic speed (under the assumption that $Z$ diffuses at linear speed).
{\cec Although polynomially-decaying large deviations are not so common in the literature, 
they do appear in the following: for the positions of the roots of a random polynomial (see Schehr and Majumdar~\cite{SM1, SM2}), and for a predator-prey model with resetting (see Evans, Majumdar, and Schehr~\cite{EMS22}).}

\subsection{Discussion of our assumptions}
As in~\cite{MUB18}, our proof relies on the fact that, by the Markov property, for all $t\geq 0$, $X(t) = Z(S(t))$ in distribution, where, on the right-hand side, the process $(S(t))_{t\geq 0}$ is independent of the Markov process~$Z$ of semi-group~$P$. The process $S(t)$ is a sum of two components, one of which is the time from $t$ back to the last relocation time before~$t$; we call this quantity $A(t)$. The fact that $A(t)/s(t)\to 0$ almost surely under {\bf (A1a)} and only in probability if {\bf (A1b)} explains why~\eqref{eq:Result} holds almost surely under {\bf (A1a)} and in probability under {\bf (A1b)}.

Assumption {\rm\bf (A2)} allows for a large class of semi-groups. 
For example, the semi-group of a random walk with i.i.d.\ increments satisfies {\rm\bf (A2)} 
as long as the Laplace transform of the increments has infinite radius of convergence. 
Similarly, the semi-group of the standard $d$-dimensional Brownian motion 
satisfies {\rm\bf (A2)} with $\Lambda_Z(\zeta) = \|\zeta\|^2/2$.

{\cec Assumption~{\rm\bf (A3)} is necessary to apply Gartner-Ellis theorem, 
which is how we prove Theorem~\ref{ThLarDevMW}.
Without Assumptions~{\rm\bf (A3)}(i-iii), 
one would still get some large deviation result 
(see the conclusions (i) and (ii) of Gartner-Ellis theorem - Theorem~\ref{ThGEas}). 
However, Assumption~{\rm\bf (A3)}(o) seems more crucial to getting a large deviation result at all.
Examples of run-length distributions $\phi$ and Markov processes $Z$ that satisfy Assumption~{\rm\bf (A3)} are:
\begin{itemize}
\item If $\int \mathrm e^{\xi x}\mathrm d\phi(x)<\infty$ for all $\xi\in\mathbb R$, 
$\Lambda(\xi)$ is (finite and) differentiable on $\mathbb R$,
and $\Lambda_Z$ is (finite and) differentiable on $\mathbb R^d$, 
then Assumption~{\rm\bf (A3)} holds. 
For example, one can take, for $Z$, the standard $d$-dimensional Brownian motion, 
and $\phi$ with bounded support or such that $\phi([x,\infty)) = \exp(-x^{\kappa})$ for some $\kappa>1$.
\item If $\phi$ is the exponential distribution with parameter $\lambda>1$, and $Z$ is the standard $d$-dimensional Brownian motion, then Assumption~{\rm\bf (A3)} holds. Indeed, in that case,
\[\Lambda(\xi) = \begin{cases}
\frac{\xi}{\lambda-\xi} &\text{ if }|\xi|<\lambda\\
\infty &\text{ otherwise.}
\end{cases}\]
Thus, $\mathcal D = \{\zeta\in\mathbb R^d\colon \|\zeta\|<\sqrt{2\log \lambda}\}$.
Assumption~{\rm\bf (A3)}(o) thus holds.
Importantly, note that, if $\lambda\leq 1$, then Assumption~{\rm\bf (A3)}(o) fails.
Furthermore, $\Lambda$ is lower semi-continuous on $\mathbb R$, and $\Lambda_Z$ is continuous on $\mathbb R^d$; thus, Assumption~{\rm\bf (A3)}(i). Because $\Lambda$ is differentiable on $(-\lambda, \lambda)$ and $\Lambda_Z$ is differentiable on $\mathbb R^d$, Assumption~{\rm\bf (A3)}(ii) holds.
Finally,
$\Lambda'(\xi) = \lambda/{(\lambda-\xi)^2}$ diverges when $|\xi|\uparrow\lambda$. Thus, Assumption~{\rm\bf (A3)}(iv) holds.
\item In the original model of Boyer and Solis-Salas~\cite{BSS14}, i.e.\ if $\phi$ is the geometric distribution with success parameter $q\in(0,1)$ and $Z$ is the simple symmetric $d$-dimensional random walk, then Assumption~{\rm\bf (A3)} holds as soon as $q>1-\nicefrac1{\mathrm{e}}$.
Indeed, in this case, one can check that
\[\Lambda(\xi) = \begin{cases}
\frac q{\xi} \left(\frac{q\mathrm e^{\xi}}{1-(1-q)\mathrm e^{\xi}}-1-\frac\xi q\right)&\text{ if }\xi<\log\Big(\frac1{1-q}\Big)\\
\infty &\text{ otherwise.}
\end{cases}\]
Furthermore, $\Lambda_Z(\zeta) = \mathbb E[\mathrm e^{\zeta \cdot X}]$, where $X$ is an increment of the simple symmetric random walk, and thus $\Lambda_Z(0) = 1$. 
Thus, Assumption {\bf (A3)}(o) only holds if $1<\log(1/(1-q))$, 
which is equivalent to $q>1-\nicefrac1{\mathrm{e}}$. 
It is straightforward to check that, in that case, Assumptions {\bf (A3)}(i-iii) also hold.
\end{itemize}
As one can see with these examples, Assumption~{\bf (A3)} ensures that the tail of the run-length distribution is not too heavy, relatively to the large deviations of the underlying process $Z$.}

{\cec \medskip
{\bf Remark on the rate function:} 
As often in large deviations, it is difficult to get an explicit formula for the rate function.
In fact, even in simple examples ($\phi = \delta_1$ or $\phi = \mathrm{Exp}(\lambda>1)$, and $Z$ being the standard one-dimensional Brownian motion), we were not able to calculate an explicit formula.}

\section{Preliminaries}

\subsection{The weighted random recursive tree}
As discussed in~\cite{MUB18}, there is a link between the monkey process and 
the weighted random recursive tree ({\sc wrrt}) with weights $(W_i)_{i\geq 1}$ given by
\begin{equation}\label{eq:defW1}
W_i = \int_{T_{i-1}}^{T_i} \mu(s)\,\mathrm ds\qquad (i\geq 1).
\end{equation}
This tree is defined recursively as follows: 
$\tau_0$ has only one node, the root $\nu_1$, which has weight $W_1$. 
Given $\tau_{n-1}$, we pick a node at random in $\tau_{n-1}$ 
with probability proportional to the weights, 
and add node $\nu_n$ with weight $W_n$ as a new child to this randomly chosen node; 
this defines~$\tau_n$.

The weighted random recursive tree for a general weight sequence $(W_i)_{i\geq 1}$
was originally introduced by Borovkov and Vatutin~\cite{BV05, BV06}.
Note that when all weights almost surely equal~1, 
the {\sc wrrt} is the classical random recursive tree.
More recently, Mailler and Uribe-Bravo~\cite{MUB18} 
proved the convergence in probability of its ``profile'' 
(a measure encoding how many nodes are in each generation of the tree);
there result applies in particular to the case when the weights are i.i.d.\ random variables, under an 8-th moment condition.
S\'enizergues~\cite{D19} 
proved results about its degree distribution and height for 
a deterministic sequence of weights under very general assumptions 
(with more refined results about the height --maximum distance of a node to the root-- in Pain and S\'enizergues~\cite{PS}).
Finally, Fountoulakis, Iyer, Mailler and Sulzbach~\cite[Prop.\ 3]{FIMS} (see also Iyer~\cite{Tejas} for a generalisation) and Lodewijks and Ortgiese~\cite{Bas} prove asymptotic results on, respectively the degree distribution and the largest degree of the {\sc wrrt} with i.i.d.\ weights.

\subsection{Preliminaries to the proof of Theorem~\ref{ThLarDevMW}}
As mentioned in the introduction, the key of the proof is to write that, for all $t\geq 0$, in distribution,
$X(t) = Z(S(t))$. The aim of this section is to explain why this identity is true, and to give a formula for~$S(t)$; all the statements of this section are already proved in~\cite{MUB18}; we refer the reader to Figure~\ref{fig:eqd} where the notation is illustrated.

Fix $t\geq 0$, and set $i(t)$ to be the number of the ``run'' $t$ belongs to, i.e.\ 
\begin{linenomath}\begin{equation}\label{eq:def_i(t)}
\sum_{i=1}^{i(t)-1} L_i\leq t<\sum_{i=1}^{i(t)} L_i.
\end{equation}\end{linenomath}
We define a sequence $(i_k(t))_{0\leq k\leq K(t)}$ recursively as follows:
set $i_0(t) = i(t)$ and if $i(t) = 1$ then set $K(t) = 0$. 
For all $k\geq 0$ given $i_k(t)$ and assuming that $k\neq K(t)$,
we set $i_{k+1}(t)$ as the number of the run that contains $R_{i_k(t)}$, i.e.
\[\sum_{i=1}^{i_{k+1}(t)-1} L_i\leq R_{i_k(t)}<\sum_{i=1}^{i_{k+1}(t)} L_i;\]
if $i_{k+1}(t)=1$ then set $K(t) = k$.

\begin{figure}
\begin{center}\includegraphics[width=10cm]{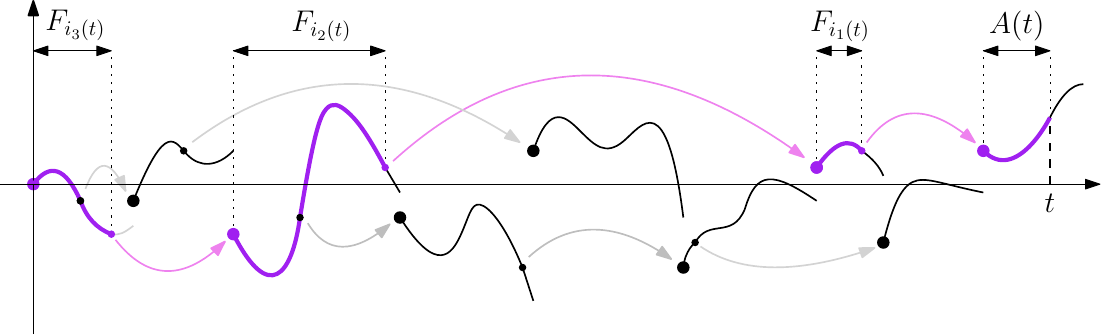}\end{center}
\caption{A trajectory of the monkey process up to time $t$ where we illustrate the definitions of $A(t), (i_k(t))_{0\leq k\leq K(t)}$, and $(F_i)_{i\geq 1}$. In this case, $K(t) = 3$, $i_0(t) = 9$ because time $t$ falls into the 9th run. We also have $i_1(t) = 7$, $i_2(t) = 3$ and $i_3(t) = 1$. 
Collapsing the pink arrows, one can glue the purple trajectory together and ignore the rest of the monkey process trajectory. 
By the Markov property of the underlying Markov process, 
this purple trajectory is distributed as $Z$ run for a random amount of time, 
which equals $S(t) = A(t) + \sum_{k=1}^{K(t)} F_{i_k(t)}$.}
\label{fig:eqd}
\end{figure}

Using the fact that, by definition of the model, $X(R_{i_k(t)}) = X(T_{i_k(t)-1})$, and by the Markov property of $Z$ we get that $X(t) = Z(S(t))$ in distribution (see Figure~\ref{fig:eqd}), where $S$ and $Z$ are independent and
\begin{linenomath}\begin{equation}\label{eq:defS}
S(t) = A(t) + \sum_{k=1}^{K(t)} F_{i_k(t)},
\end{equation}\end{linenomath}
where $A(t) = t - \sum_{i=1}^{i(t)-1} L_i$ is the time from $t$ back to the last relocation time from~$t$
and, for all $i\geq 1$, for all $0\leq x \leq L_i$,
\begin{linenomath}\begin{equation}\label{eq:defF}
\mathbb P(F_i\leq x \mid \bs L) 
= \frac{\int_{T_{i-1}}^{T_{i-1}+x} \mu(u)\,\mathrm du}{\int_{T_{i-1}}^{T_{i-1}+L_i} \mu(u)\,\mathrm du}.
\end{equation}\end{linenomath}
This is true because $F_i$ is distributed as the distance from $R_n$ to $T_{i-1}$ given that $R_n\in[T_{i-1}, T_i)$ (note that this distribution, and thus $F_i$, does not depend on~$n$).

\begin{remark} In the uniform memory case, the distribution of $F_i$ is easy to describe: in distribution, $F_i = U_i L_i$, where $(U_i)_{i\geq 1}$ is a sequence of i.i.d.\ random variables, uniformly distributed on~$[0,1]$.
 \end{remark}

It is important to note that the distribution of the $F_{i_k(t)}$'s is not the same as the distribution of the $F_i$'s, because the index $i_k(t)$ is random and biases towards longer runs: we expect the $F_{i_k(t)}$'s to be stochastically larger than the $F_i$'s. 
To explicit this bias, we need to introduce the following notion of genealogy between runs: 
For two integers $i, j\geq 1$, we say that run $i$ is the parent of run $j$ 
if $R_j$ belongs to the $i$-th run, i.e.\ $R_j\in [T_{i-1}, T_i)$;
we write $i\prec j$ if run $i$ is an ancestor of run $j$ for this notion of parenthood.
E.g., on Figure~\ref{fig:def}, we have $1\prec 2\prec 5$.
Note that the genealogical tree for this notion of parenthood is 
the {\sc wrrt} of weight sequence $\bs W = (W_i)_{i\geq 1}$ 
(see~\eqref{eq:defW1} for the definition of this sequence).

With this notation, Equation~\eqref{eq:defS} becomes
\begin{linenomath}\begin{equation}\label{eq:formS}
S(t) = A(t) + \sum_{i=1}^{i(t)-1} F_i\mathds 1_{i\prec i(t)}.
\end{equation}\end{linenomath}
In~\cite{MUB18}, the authors prove the following result, which we use extensively in our proofs:
\begin{proposition}[See \cite{MUB18}]\label{prop:dobr}
Conditionally on $\bs L$, for all $n\geq 1$, 
the random variables $(\mathds{1}_{i\prec n})_{1\leq i\leq n-1}$ 
are independent random variables of respective parameters $W_i/S_i$, 
where, for all $i\geq 1$,
\begin{linenomath}\begin{equation}\label{eq:defW}
W_i = \int_{T_{i-1}}^{T_i}\mu(u)\,\mathrm du 
\quad \text{ and }\quad 
S_i=\sum_{k=1}^i W_k.
\end{equation}\end{linenomath}
\end{proposition}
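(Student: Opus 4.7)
The plan is to work directly with the parent sequence $(P_k)_{k\geq 2}$ of the WRRT, where $P_k$ denotes the parent of node~$k$. Conditional on $\bs L$, these are independent with $\PP(P_k = j) = W_j/S_{k-1}$ for $j\in\{1,\dots, k-1\}$ by construction of the tree. The event $\{i\prec n\}$ means that $i$ appears on the strictly decreasing backward path $p_0=n,\ p_{\ell+1}=P_{p_\ell}$, which ends at the root $1$. For any $A = \{i_1<\dots<i_k\}\subseteq\{1,\dots,n-1\}$, $\bigcap_{i\in A}\{i\prec n\}$ is therefore equivalent to the chain event $\{i_1\prec i_2\prec\dots\prec i_k\prec n\}$. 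I will prove by induction on $n$ that, for every such $A$,
\ba
\PP\Bigl(\bigcap_{i\in A}\{i\prec n\}\Bigr) = \prod_{i\in A}\frac{W_i}{S_i};
\ea
the factorisation of these mixed moments across all subsets $A$ is equivalent to joint independence of the Bernoulli indicators $\mathds{1}_{i\prec n}$ with the stated parameters $W_i/S_i$.

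The core step is a splitting lemma along the backward path. Let $\tau = \min\{\ell\geq 1 : p_\ell\leq i_k\}$ denote the first time the path drops to $i_k$ or below. For any $j\leq i_k$, summing over the possible positions $k_0>i_k$ that $p_{\tau-1}$ can occupy and using $\PP(P_{k_0}=j)=W_j/S_{k_0-1}$ shows that $\PP(p_\tau = j)$ is proportional to $W_j$; normalisation (the path almost surely hits $\{1,\dots,i_k\}$ since it terminates at $1$) then yields $\PP(p_\tau = j)=W_j/S_{i_k}$, and in particular $\PP(i_k\prec n) = \PP(p_\tau = i_k) = W_{i_k}/S_{i_k}$. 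Crucially, the initial segment $(p_0,\dots, p_\tau)$ is measurable with respect to $\sigma((P_k)_{k>i_k})$, because every update up to and including time~$\tau$ invokes some $P_{k_0}$ with $k_0>i_k$, whereas the continuation $(p_{\tau+1}, p_{\tau+2},\dots)$, given $p_\tau$, depends only on $(P_k)_{k\leq p_\tau\leq i_k}$. These two collections of $P_k$'s are disjoint, hence independent, so conditional on $p_\tau=i_k$ the continuation is distributed exactly as the backward path from $i_k$ in the WRRT restricted to its first $i_k$ nodes.

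Combining these two facts yields
\ba
\PP\bigl(i_1\prec\dots\prec i_k\prec n\bigr) = \frac{W_{i_k}}{S_{i_k}}\cdot \PP\bigl(i_1\prec\dots\prec i_{k-1}\prec i_k\bigr),
\ea
and the induction hypothesis (applied with $n$ replaced by $i_k$ and $A$ by $A\setminus\{i_k\}$) closes the recursion, with the trivial base case $A=\emptyset$. The main obstacle I anticipate is the careful measurability bookkeeping in the splitting step: because $\tau$ and the intermediate values $(p_0,\dots,p_\tau)$ are themselves random, one has to verify that they depend only on $(P_k)_{k>i_k}$ and cleanly decouple from the continuation. Once that is made rigorous, the independence needed in the splitting step is an immediate consequence of the product-measure structure of $(P_k)_{k\geq 2}$.
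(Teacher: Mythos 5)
Your argument is correct, but it cannot be ``the same as the paper's'' because the paper gives no proof of Proposition~\ref{prop:dobr}: it defers to \cite{MUB18}, whose argument (as the remark after the proposition indicates) generalises Dobrow's computation for the random recursive tree, i.e.\ a recursive first-step conditioning on the parent of~$n$. Your route is different and self-contained: you reduce joint independence to the factorisation of $\PP(\bigcap_{i\in A}\{i\prec n\}\mid\bs L)$ over all subsets $A$ (legitimate for $\{0,1\}$-valued variables, by inclusion--exclusion), identify $\bigcap_{i\in A}\{i\prec n\}$ with the chain event using that labels strictly decrease along the ancestral path, and then split that path at $\tau$, the first time it enters $\{1,\dots,i_k\}$. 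The key points are all sound: the decomposition of $\{p_\tau=j\}$ over the last visited label $k_0>i_k$, namely $\{p_\tau=j,\,p_{\tau-1}=k_0\}=\{k_0\text{ on the path from }n\}\cap\{P_{k_0}=j\}$ with the first event $\sigma((P_m)_{m>k_0})$-measurable and hence independent of $P_{k_0}$ (conditionally on $\bs L$), which gives $\PP(p_\tau=j\mid\bs L)\propto W_j$ and, after normalisation, $W_j/S_{i_k}$; and the observation that $(p_0,\dots,p_\tau)$ is $\sigma((P_m)_{m>i_k})$-measurable while $\{i_1\prec\dots\prec i_{k-1}\prec i_k\}$ is $\sigma((P_m)_{m\le i_k})$-measurable, so the two factors decouple and strong induction on $n$ (valid since $i_k<n$) closes the recursion. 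Compared with the Dobrow/\cite{MUB18} first-step recursion, your approach buys a direct, recursion-free proof of the marginal $\PP(i\prec n\mid\bs L)=W_i/S_i$ and makes the source of independence transparent (the indicator $\mathds 1_{i\prec n}$ splits across $(P_m)_{m>i}$ and $(P_m)_{m\le i}$); the only bookkeeping to spell out in a final write-up is precisely the displayed event identity above, which is where the invoked independence is actually used.
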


\begin{remark}
The proof of this proposition relies on the fact that the genealogical tree defined by the parenthood relationship $\prec$ is the weighted random recursive tree of weight sequence $(W_n)_{n\geq 1}$. The proof of Proposition~\ref{prop:dobr} (see~\cite{MUB18}) is a generalisation of the proof of the same result for the random recursive tree, which dates back to Dobrow~\cite{Dobrow}.
\end{remark}

\section{Proof of Theorem~\ref{ThLarDevMW}}
\subsection{G\"artner-Ellis' theorem}
The proof of Theorems~\ref{ThLarDevMW} relies on G\"artner-Ellis' large deviations theorem (see~\cite[Chapter~V2]{H08}):
\begin{dfn}\label{def:Legendre}
We let $F^*$ denote the Legendre transform of any function~$F : \mathbb R^d \to \RR$, 
i.e. 
\[F^*(x)=\sup_{t\in \RR^d} \{x\cdot t -F(t)\}, \quad \forall x\in \RR^d.\]
A value $t\in \RR^d$ is called an exposing hyperplane for $x\in\mathbb R^d$ if 
\[F^*(y)-F^*(x)>(y-x)\cdot t, \quad \forall y\neq x.\]
A point $x\in \RR^d$ is called exposed if there exists at least one exposing hyperplane for it.
\end{dfn}

Let $(X(t))_{t\geq 0}$ be a real-valued stochastic process on some probability space $(\Omega, \mathcal{F},\PP)$. 
We assume that there exists a sequence $(u_t)_{t\geq 0}$ such that $u_t\to +\infty$ when $t\to \infty$ and 
\begin{itemize}
\item[{\bf (H1)}] there exists a function $F: \RR \to [-\infty,+\infty]$ such that, 
for all $\zeta\in\mathbb R^d$, 
\[\lim_{t\to \infty} \frac{1}{u_t}\log \mathbb E[\mathrm e^{u_t\zeta\cdot X(t)}]=F(\zeta).\]
\item[{\bf (H2)}] $0\in \inte(\mathcal D_{F})$, where $\mathcal D_{F}=\{ t\in \RR : F(t)<\infty\}.$
\end{itemize}

\begin{theorem}\label{ThGEas} 
Let $(X(t))_{t\geq 0}$ be a stochastic process taking values in $\mathbb R^d$ and satisfying Assumptions~{\rm\bf (H1-2)}. 
Then 
\begin{itemize}
\item[{\rm (i)}] $\limsup_{t\to \infty} \frac{1}{u_t}\log \mathbb P(X(t)\in \mathcal C)\leq -\inf_{\mathcal C}F^*$ for all closed set $\mathcal C\subseteq \RR^d$.
\item[{\rm (ii)}] $\liminf_{t\to \infty} \frac{1}{u_t}\log \mathbb P(X(t)\in \mathcal G)\geq-\inf_{\mathcal G\cap \mathcal E} F^*$ for all open set $\mathcal G\subseteq \RR^d$, 
where $\mathcal E=\mathcal E(F,F^*)$ 
is the set of exposed points of $F^*$ 
whose exposing hyperplanes belongs to $\inte(D_{F})$.
\item[{\rm (iii)}] Suppose, in addition, that $F$ satisfies the following: \begin{itemize}
\item[a.] $F$ is lower semi-continuous on $\RR^d$.
\item[b.] $F$ is differentiable on $\inte(\mathcal{D}_{F})$.
\item[c.] Either $\mathcal D_{F}=\RR^d$ or $\lim_{t\to \partial \mathcal D_{F}} |F'(t)|=\infty$.
\end{itemize}
Then, $\mathcal G\cap \mathcal E$ can be replaced with $\mathcal G$ in the right-hand side of {\rm (ii)}.
\end{itemize}
\end{theorem}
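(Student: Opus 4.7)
The plan is to prove parts (i)--(iii) in sequence, following the standard Chernoff/tilting/convex-analysis scheme behind G\"artner--Ellis' theorem. A preliminary observation is that $F$ is automatically convex, being a pointwise limit of the convex functions $\zeta\mapsto u_t^{-1}\log\mathbb E[\mathrm e^{u_t\zeta\cdot X(t)}]$; this legitimises the Legendre duality used throughout. For the upper bound (i), I start from the Chernoff estimate: for any $\zeta,a$, Markov's inequality applied to $\mathrm e^{u_t\zeta\cdot X(t)}$ combined with (H1) yields
\[
\limsup_{t\to\infty}\frac{1}{u_t}\log\mathbb P(\zeta\cdot X(t)\ge a)\le F(\zeta)-a.
\]
For a compact set $\mathcal C$, I cover $\mathcal C$ by finitely many half-spaces chosen so that the bound at each $x\in\mathcal C$ is at most $-F^*(x)+\varepsilon$, and take the worst. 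To extend to closed $\mathcal C$, I would establish exponential tightness from (H2): since $0\in\inte(\mathcal D_F)$, one has $F(\pm\eta e_j)<\infty$ for some $\eta>0$ and each coordinate direction, and the Chernoff bounds in these $2d$ directions give $\mathbb P(\|X(t)\|\ge R)\le \mathrm e^{-cu_t R}$ for all $R$ large enough, reducing the closed case to the compact one.

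For the lower bound (ii), fix an open $\mathcal G$, an exposed point $x_0\in\mathcal G\cap\mathcal E$ with exposing hyperplane $\eta\in\inte(\mathcal D_F)$, and a radius $\delta>0$ with $B(x_0,\delta)\subset\mathcal G$. The key device is the exponential tilt $\mathrm d\mathbb P_\eta=\mathrm e^{u_t\eta\cdot X(t)}\mathrm d\mathbb P/\mathbb E[\mathrm e^{u_t\eta\cdot X(t)}]$. Restricting to $\{X(t)\in B(x_0,\delta)\}$ and undoing the tilt,
\[
\frac{1}{u_t}\log\mathbb P(X(t)\in B(x_0,\delta))\ge \frac{1}{u_t}\log\mathbb E[\mathrm e^{u_t\eta\cdot X(t)}]-\eta\cdot x_0-\|\eta\|\delta+\frac{1}{u_t}\log\mathbb P_\eta(X(t)\in B(x_0,\delta)).
\]
By (H1) the first term tends to $F(\eta)$, and the Fenchel identity $F(\eta)-\eta\cdot x_0=-F^*(x_0)$, which holds because $\eta$ is an exposing hyperplane at $x_0$, leaves us with showing $\mathbb P_\eta(X(t)\in B(x_0,\delta))\to 1$. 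The cumulant limit under $\mathbb P_\eta$ is $F_\eta(\zeta)=F(\eta+\zeta)-F(\eta)$, with Legendre dual $F_\eta^*(y)=F^*(y)-F^*(x_0)-(y-x_0)\cdot\eta$; the exposing property says exactly that $F_\eta^*>0$ off $\{x_0\}$. Applying the upper bound (i) under $\mathbb P_\eta$ to the closed set $B(x_0,\delta)^c$ then forces the tilted probability of $B(x_0,\delta)^c$ to vanish exponentially, and letting $\delta\downarrow 0$ and optimising over $x_0\in\mathcal G\cap\mathcal E$ completes (ii).

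Finally, for part (iii), conditions (a)--(c) make $F$ a closed, proper, essentially smooth convex function; Rockafellar's theory then tells us that $\nabla F:\inte(\mathcal D_F)\to\mathbb R^d$ is a homeomorphism onto $\inte(\mathcal D_{F^*})$ and that every $x\in\inte(\mathcal D_{F^*})$ is exposed by $\eta=\nabla F^*(x)\in\inte(\mathcal D_F)$. A continuity/approximation argument using lower semicontinuity of $F^*$ then upgrades $\mathcal G\cap\mathcal E$ to $\mathcal G$ on the right-hand side of (ii). The principal difficulty is the lower bound, where one must re-run the upper-bound machinery inside the tilted measure $\mathbb P_\eta$; this requires verifying that $\mathbb P_\eta$ satisfies (H1) with limit $F_\eta$ (immediate from the definition) and that $F_\eta$ inherits (H2) near $0$ (automatic since $\eta\in\inte(\mathcal D_F)$). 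Everything else is bookkeeping built on these two key devices.
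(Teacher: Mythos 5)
Your outline is a correct (if schematic) derivation, but it takes a genuinely different route from the paper: the paper does not prove Theorem~\ref{ThGEas} at all. It treats it as the classical G\"artner--Ellis theorem of~\cite[Chapter~V2]{H08} and disposes of the generalisation in Remark~\ref{rk:GE} with the observation that ``a simple change of the time-variable $t$ gives the extended version'', i.e.\ the classical proof only ever uses that the normalising speed tends to infinity, so replacing $t$ by $u_t$ costs nothing. What you have written is, in effect, the substantiation of that one-line remark: you re-run the standard machinery (Chernoff bound plus finite half-space cover for compact sets, exponential tightness from $0\in\inte(\mathcal D_F)$ to pass to closed sets, exponential tilting $\mathrm d\mathbb P_\eta\propto\mathrm e^{u_t\eta\cdot X(t)}\mathrm d\mathbb P$ for the lower bound at exposed points, and Rockafellar's essential-smoothness theory for part (iii)) and verify that every step survives with $u_t$ in place of $t$. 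This buys self-containedness at the price of length; the paper's citation buys brevity at the price of asking the reader to trust the reparametrisation. Two points in your sketch deserve to be made explicit if it were to be written out. First, in the lower bound, knowing that $F_\eta^*>0$ off $\{x_0\}$ does not by itself give $\inf_{B(x_0,\delta)^c}F_\eta^*>0$, since $B(x_0,\delta)^c$ is unbounded; you need the compactness of the level sets of $F_\eta^*$, which follows from $0\in\inte(\mathcal D_{F_\eta})$ (equivalently, exponential tightness under the tilted law), exactly as in the untilted upper bound. Second, in part (iii) the correct statement is that $\rint(\mathcal D_{F^*})$ is contained in the set of exposed points with exposing hyperplane in $\inte(\mathcal D_F)$ (the phrase ``exposed by $\nabla F^*(x)$'' presumes a differentiability of $F^*$ you have not established), and the passage from $\inf_{\mathcal G\cap\mathcal E}F^*$ to $\inf_{\mathcal G}F^*$ uses upper semicontinuity of the convex function $F^*$ along segments into $\rint(\mathcal D_{F^*})$ rather than a generic ``continuity'' argument. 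Neither issue is a gap in substance --- both are handled verbatim in the classical proof the paper cites.
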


\begin{remark}\label{rk:GE}
This version is in fact an extension of G\"artner-Ellis' theorem as stated in, 
e.g.~\cite[Chapter~V2]{H08}: in the classical version, $u_t = t$ for all $t>0$.
A simple change of the time-variable~$t$ gives the extended version stated here.

Also note that if the distribution of the process $(X(t))_{t\geq 0}$ is random,
and if {\rm\bf (H1-2)} hold almost surely, then the conclusion of Theorem~\ref{ThGEas} holds almost surely as well. It is less immediate but easy to check by adapting the proof of, e.g., \cite[Chapter~V2]{H08} that, if {\rm\bf (H1-2)} hold in probability, then the conclusion of Theorem~\ref{ThGEas} also holds in probability.
\end{remark}

\subsection{Proof of Theorem~\ref{ThLarDevMW}}
To prove Theorem~\ref{ThLarDevMW}, 
we first prove that $(S(t))_{t\geq 0}$ satisfies {\rm\bf (H1-2)} with $u_t = s(t)$ and $F=\Lambda$ (see Equation~\eqref{eq:def_Lambda} for the definition of $\Lambda$, and Equation~\eqref{eq:st} for the definition of $s(t)$). 
Recall that, by Equation~\eqref{eq:formS}, for all $t\geq 0$,
$S(t) = A(t) + B(t)$, where 
\begin{linenomath}\begin{equation}\label{eq:defB}
B(t) = \sum_{i=1}^{i(t)-1} F_i\mathds 1_{i\prec i(t)}.
\end{equation}\end{linenomath}
The following proposition implies that, conditionally on $\bs L$, 
almost surely, $(B(t))_{t\geq 0}$ satisfies Assumption {\rm\bf (H1)}:
\begin{proposition}\label{phi for S(t)} 
Suppose that $\int_0^{\infty} \mathrm e^{\xi u}\,\mathrm d\phi(u)<\infty$ for all $\xi \in \RR$. 
Then, almost surely,
\[\lim_{t\to \infty} \frac{1}{s(t)}\log \EE[\mathrm{e}^{\xi B(t)}\mid \bs L] = \Lambda(t),
\]
where $\Lambda$ is defined in Equation~\eqref{eq:def_Lambda} 
and $s(t)$ in Equation~\eqref{eq:st}.
\end{proposition}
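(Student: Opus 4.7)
The plan is to apply Proposition~\ref{prop:dobr} to express the conditional Laplace transform of $B(t)$ as an explicit product over the runs $1,\dots,n(t)-1$ (where $n(t):=i(t)$ is $\bs L$-measurable), and then to reduce this to a weighted SLLN for i.i.d.\ random variables. By Proposition~\ref{prop:dobr} the Bernoullis $\{\mathds{1}_{i\prec n(t)}\}_{1\le i\le n(t)-1}$ are, conditionally on $\bs L$, independent of parameters $W_i/S_i$, and may be coupled to be independent of the $F_i$'s (which themselves are independent across $i$ given $\bs L$). Hence
\[\log \EE[e^{\xi B(t)}\mid\bs L]=\sum_{i=1}^{n(t)-1}\log\Bigl(1+\tfrac{W_i}{S_i}\bigl(\psi_i(\xi)-1\bigr)\Bigr), \qquad \psi_i(\xi):=\EE[e^{\xi F_i}\mid\bs L].\]
I would first Taylor-expand $\log(1+x)\simeq x$ and then evaluate the resulting sum.

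The next step is to extract sharp asymptotics for each ingredient. By the SLLN, $T_i/i\to\EE[L]$ a.s., so for large $i$ the window $[T_{i-1},T_i]$ is short relative to the scale at which $\mu$ varies, and a first-order Taylor expansion of $\mu$ on this window yields $W_i=\mu(T_{i-1})L_i(1+o(1))$ and
\[\psi_i(\xi)-1=\frac{1}{W_i}\int_{T_{i-1}}^{T_i}\bigl(e^{\xi(u-T_{i-1})}-1\bigr)\mu(u)\,du=\frac{e^{\xi L_i}-1-\xi L_i}{\xi L_i}\bigl(1+o(1)\bigr).\]
From the explicit forms of $\mu_1,\mu_2$ one also verifies that $\mu(x)/\int_0^x\mu$ is asymptotic to a constant multiple of $s'(x)$, whence $W_i/S_i\sim C\,L_i\, s'(T_{i-1})\to 0$ for a case-dependent constant~$C$. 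Setting $Y_i:=(e^{\xi L_i}-1-\xi L_i)/\xi$ (i.i.d., with finite mean $m_Y:=\EE[e^{\xi L}-1-\xi L]/\xi$ under~\textbf{(A2)}), the $i$-th summand is therefore asymptotically $C\,s'(T_{i-1})Y_i$.

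Finally, the problem reduces to the weighted SLLN
\[\frac{1}{s(t)}\sum_{i=1}^{n(t)-1}s'(T_{i-1})\,Y_i\xrightarrow[t\to\infty]{\text{a.s.}}\frac{m_Y}{\EE[L]}.\]
The deterministic-type asymptotic $\sum_{i\le n(t)}s'(T_{i-1})\sim s(t)/\EE[L]$ follows from $T_i\sim i\EE[L]$ combined with a Riemann-sum argument (exploiting the eventual monotonicity of $s'$), while the random part follows from a weighted Kolmogorov-type SLLN for i.i.d.\ summands $Y_i$ against the slowly-varying, divergent-sum weights $s'(T_{i-1})$, using the full exponential-moment assumption \textbf{(A2)}.

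The main technical obstacle will be to control the quadratic remainder of $\log(1+x)\simeq x$: one needs $\sum_i(W_i/S_i)^2\psi_i(2\xi)=o(s(t))$, which is borderline because $\sum s'(T_{i-1})^2$ can diverge logarithmically (e.g.\ when $\mu=\mu_2$ with $\delta=\tfrac12$), and because $\psi_i(2\xi)$ contains $e^{2\xi L_i}$, which forces a Borel--Cantelli argument to tame occurrences of atypically large $L_i$ using the super-exponential tail of $L$ imposed by~\textbf{(A2)}. A secondary concern is making the $o(1)$ errors in the slow-variation approximations of $W_i$ and $\psi_i$ uniform enough in $i$ to sum cleanly, handled case by case via the explicit forms of $\mu_1$ and~$\mu_2$.
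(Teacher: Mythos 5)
Your plan follows the paper's own proof essentially step for step: the same product representation via Proposition~\ref{prop:dobr}, the same linearisation of $\log(1+x)$ with the quadratic remainder bounded by $\sum_i (W_i/S_i)^2\,\EE[\mathrm e^{2\xi F_i}\mid\bs L]$, the same pointwise approximation $W_i\big(\EE[\mathrm e^{\xi F_i}\mid\bs L]-1\big)\approx \mu(T_{i-1})\,(\mathrm e^{\xi L_i}-1-\xi L_i)/\xi$ (this is the content of Lemma~\ref{Le: AsymptoticExpe}), and the same reduction to a weighted strong law for the i.i.d.\ variables $(\mathrm e^{\xi L_i}-1-\xi L_i)/\xi$ against weights $\sim s'(T_{i-1})$, including the borderline $\delta=\nicefrac12$ remainder and the Borel--Cantelli taming of atypically large $L_i$ (the paper packages this into Lemmas~\ref{Lemma: AsympSeries} and~\ref{Le: AsymptMoment}, using the law of the iterated logarithm, an $L^2$-bounded martingale and Petrov's SLLN where you invoke a weighted Kolmogorov-type SLLN plus a Riemann-sum argument). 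The only loose end is the case-dependent constant $C$ in $\mu/\!\int\mu\sim C\,s'$ (one has $C=\beta$ when $\mu=\mu_1$, $\beta\neq0$, but $C=1$ in the other cases), which your final display silently sets to $1$; its bookkeeping against the chosen normalisation $s(t)$ should be made explicit, exactly as in the paper's treatment of that case.
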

Because it is quite technical, we postpone the proof of Proposition~\ref{phi for S(t)} to Section~\ref{sec:proof_props}. We now treat the term $A(t)$ in $S(t) = A(t)+B(t)$; 
recall that $A(t)$ is the time back from $t$ to the last relocation time before~$t$: $A(t) = t-T_{i(t)}$.

\begin{lemma}\label{A(t) as} 
Suppose that $\int_0^{\infty} \mathrm e^{\xi u}\,\mathrm d\phi(u)<\infty$ for all $\xi \in \RR$.
Under Assumption {\rm\bf (A1)}, $A(t) = o(s(t))$ as $t\to \infty$, almost surely under {\rm\bf (A1a)} and in probability under {\rm\bf (A1b)}.
\end{lemma}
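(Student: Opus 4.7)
The plan is to exploit that $A$ resets to zero at each relocation time and grows linearly between them, so that
\[\sup_{t\in[T_{n-1},T_n)} A(t) = L_n,\]
and in particular $A(t)\leq L_{i(t)}$. Since $\phi$ has finite mean (in fact all moments, by {\bf (A2)}), the strong law gives $T_n/n\to \mathbb E[L]$ almost surely, and consequently $i(t)/t\to 1/\mathbb E[L]$ almost surely. The argument then splits cleanly along {\bf (A1a)} versus {\bf (A1b)}.

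For the almost sure statement under {\bf (A1a)}, I would use that {\bf (A2)} forces $\phi$ to have super-exponential tails: for every $K>0$ there exists $y_0$ such that $\mathbb P(L > y)\leq \mathrm e^{-Ky}$ for all $y\geq y_0$. Under {\bf (A1a)}, $s(t)\gtrsim \log t$ for all $t$ large enough (either $s(t)=(\log t)^\alpha$ with $\alpha\geq 1$, or $s(t)=\gamma t^\delta$). Working on the almost sure event $E_n=\{T_{n-1}\geq \tfrac12 n\mathbb E[L]\}$, monotonicity of $s$ yields the inclusion $\{L_n>\epsilon s(T_{n-1})\}\cap E_n\subseteq \{L_n>\epsilon s(n\mathbb E[L]/2)\}$, and therefore
\[\mathbb P\bigl(L_n > \epsilon s(T_{n-1}),\, E_n\bigr)\leq \mathbb P\bigl(L_n > \epsilon s(n\mathbb E[L]/2)\bigr)\leq \mathrm e^{-K\epsilon s(n\mathbb E[L]/2)}\]
for all $n$ large. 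Choosing $K$ large depending on $\epsilon$, and using $s(n)\gtrsim \log n$, this upper bound is summable in $n$; in the borderline case $\mu=\mu_1$ with $\alpha=1$, the bound is essentially $n^{-cK\epsilon}$, which is summable as soon as $K\epsilon$ is taken strictly larger than $1/c$. Borel--Cantelli then yields that almost surely $L_n\leq \epsilon s(T_{n-1})$ for all $n$ large, and monotonicity of $s$ gives $A(t)\leq L_{i(t)}\leq\epsilon s(T_{i(t)-1})\leq \epsilon s(t)$ for all $t$ large.

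Under {\bf (A1b)}, $s(t)$ grows strictly slower than $\log t$ (in fact as slowly as $\log\log t$ when $\beta=0$), and the Borel--Cantelli argument breaks down because the probability bound on a single $L_n$ cannot beat the polynomial factor arising in the union bound over runs. Instead I would appeal to a renewal-theoretic tightness argument: since $\phi$ has finite second moment (in fact all moments, by {\bf (A2)}), the age process $(A(t))_{t\geq 0}$ is tight as $t\to\infty$, either by a direct Markov-inequality estimate on $\mathbb E[A(t)]$ or by the standard weak convergence of the age distribution in renewal theory. Since $s(t)\to\infty$ under {\bf (A1b)}, tightness immediately gives $\mathbb P(A(t)>\epsilon s(t))\to 0$ for every $\epsilon>0$, which is the claimed convergence in probability.

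The main obstacle is the borderline case of {\bf (A1a)} with $\mu=\mu_1$, $\alpha=1$: there $s(t)$ and $\log t$ are of the same order, so the union bound over the $\Theta(t)$ runs that have occurred by time $t$ can only be beaten by Laplace-type bounds on $L_n$ that make use of all exponential moments of $\phi$. This is precisely why {\bf (A2)} is stated with infinite radius of convergence for the Laplace transform of $\phi$ rather than merely a finite radius; a sub-exponential but not super-exponential tail (e.g.\ true exponential decay) would not close the argument in this borderline case.
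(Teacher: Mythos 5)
Your proposal is correct and follows essentially the same route as the paper: the in-probability statement comes from the renewal-theoretic convergence (tightness) of the age $A(t)$, and the almost-sure statement under {\bf (A1a)} comes from $A(t)\leq L_{i(t)}$, the law of large numbers for $T_n$, and an exponential Markov/Chernoff bound on $\mathbb P(L_n>\varepsilon\cdot O(\log n))$ combined with Borel--Cantelli, exactly exploiting the infinite radius of the Laplace transform in {\bf (A2)}. The only cosmetic difference is that the paper first reduces {\bf (A1a)} to showing $A(t)=o(\log t)$ almost surely (choosing $\xi=2/\varepsilon$ in the Markov bound) rather than comparing $L_n$ directly to $\varepsilon s(T_{n-1})$ as you do.
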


\begin{proof} 
First recall that $A(t)$ converges in distribution to an almost-surely 
finite random variable (see, e.g.~\cite{Bertoin}). 
This implies that $A(t)/s(t)\to 0$ in probability for all memory kernels satisfying Assumption~{\bf (A1)}.

Under Assumption {\rm\bf (A1b)} on the memory kernel, 
and by definition of $s(t)$ (see Equation~\eqref{eq:st})
we have $\log t=\mathcal O(s(t))$ as $t\to+\infty$. 
Therefore, it suffices to show that $A(t)=o(\log t)$ almost surely when $t\to+\infty$.

First note that, by definition, $A(t)\leq L_{i(t)}$, where we recall that $i(t)$ is the number of the run to which $t$ belongs (see Equation~\eqref{eq:def_i(t)}). Equation~\eqref{eq:def_i(t)} also implies $t\geq \sum_{i=1}^{i(t)-1} L_i$. These two inequalities imply
\[\frac{A(t)}{\log t}\leq \frac{L_{i(t)}}{\log \big(\sum_{i=1}^{i(t)-1} L_i\big)}.\] 
By the law of large numbers
\[\log\left(\frac{ \sum_{i=1}^n L_{i}}{n}\right)\to \log \EE L,\] almost surely when~$n$ tends to infinity,
where $L$ is a random variable of distribution~$\phi$, the joint distribution of the run-lengths.
Equivalently, we have $\log \left(\sum_{i=1}^n L_i\right) \sim \log(n\EE L)$ 
almost surely as~$n$ tends to infinity. 
Since $i(t)$ tends to infinity almost surely, 
we only need to prove that $L_{n+1}/\log(n\EE L)\to 0$ almost surely when $n$ tends to infinity.

Fix $\varepsilon>0$. Using Markov's inequality, we get that, for any $\xi >0$,
\[
\PP\left( L_{n+1} \geq \varepsilon \log(n\EE L\right))
= \PP\big(\mathrm{e}^{\xi L_{n+1}} \geq (n\EE L)^{\varepsilon \xi}\big)
\leq \frac{\EE[\mathrm{e}^{\xi L}]}{n^{\varepsilon \xi}\EE[L]^{\varepsilon \xi}}.
\]
Choosing $\xi = 2/\varepsilon$, we have that \[\sum_{i=1}^{\infty} \PP\left( \frac{L_{n+1}}{\log (n\EE L)}\geq \varepsilon \right) \leq \sum_{i=1}^{\infty} \frac{\EE[\mathrm{e}^{2 L/\varepsilon}]}{n^2(\mathbb EL)^2}<\infty,\]
because, by assumption, $\mathbb EL>0$ and $\mathbb E\mathrm e^{\xi L}<+\infty$ for all $\xi\in\mathbb R$.
By Borel-Cantelli's lemma we get that $L_{n+1}/\log(n\EE L)\to 0$ almost surely when $n$ tends to infinity, which concludes the proof.
\end{proof}

\begin{remark}
If we assume that $\EE[\exp(\mathrm{e}^{\xi L})]<\infty$ for all $\xi \in \RR$, 
then $A(t) = o(s(t))$ almost surely also under {\rm\bf (A1b)}, and thus Equation~\eqref{eq:Result} of Theorem~\ref{ThLarDevMW} holds almost surely under {\rm\bf(A1b)}.
\end{remark}

From Proposition~\ref{phi for S(t)} and Lemma~\ref{A(t) as}, we get:
\begin{proposition} \label{prop:LDP_S}
Let $S(t)$ and $s(t)$ as in \eqref{eq:formS} and \eqref{eq:st} respectively. 
Then, for all $\xi\in\mathbb R$,
\[\lim_{t\to+\infty} \frac1{s(t)} \log \EE[\mathrm{e}^{\xi S(t)}\mid \bs L]=\Lambda(\xi),
\]
almost surely under assumption {\rm\bf (A1a)} and in probability under {\rm\bf (A1b)}.
\end{proposition}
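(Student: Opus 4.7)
The plan is to reduce the statement directly to Proposition~\ref{phi for S(t)} and Lemma~\ref{A(t) as}, by exploiting the crucial observation that, although $S(t)=A(t)+B(t)$ is a sum of two non-independent quantities (they both depend on $\bs L$), the quantity $A(t)=t-\sum_{i=1}^{i(t)-1}L_i$ is $\sigma(\bs L)$-measurable, hence \emph{deterministic} once we condition on $\bs L$. So conditioning on $\bs L$ decouples the two terms in a very strong sense.

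Concretely, the first step is to write
\[
\EE\bigl[\mathrm e^{\xi S(t)}\bigm|\bs L\bigr]
= \EE\bigl[\mathrm e^{\xi A(t)+\xi B(t)}\bigm|\bs L\bigr]
= \mathrm e^{\xi A(t)}\,\EE\bigl[\mathrm e^{\xi B(t)}\bigm|\bs L\bigr],
\]
where the last equality uses the $\sigma(\bs L)$-measurability of $A(t)$. Taking logs and dividing by $s(t)$ then yields the decomposition
\[
\frac1{s(t)}\log \EE\bigl[\mathrm e^{\xi S(t)}\bigm|\bs L\bigr]
= \xi\,\frac{A(t)}{s(t)}
+ \frac1{s(t)}\log \EE\bigl[\mathrm e^{\xi B(t)}\bigm|\bs L\bigr].
\]

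The second step is to pass to the limit term by term. By Proposition~\ref{phi for S(t)}, the second summand converges to $\Lambda(\xi)$ almost surely for every fixed $\xi\in\RR$. By Lemma~\ref{A(t) as}, the first summand $\xi A(t)/s(t)$ tends to $0$ almost surely under Assumption~\textbf{(A1a)} and in probability under Assumption~\textbf{(A1b)}. Summing the two limits then gives the claimed convergence of $s(t)^{-1}\log\EE[\mathrm e^{\xi S(t)}\mid \bs L]$ to $\Lambda(\xi)$, in the almost-sure sense under \textbf{(A1a)} (since both summands converge almost surely), and in probability under \textbf{(A1b)} (since one summand converges a.s.\ and the other only in probability, which is preserved under addition).

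No further technical work is needed, since Proposition~\ref{phi for S(t)} (whose proof is deferred to Section~\ref{sec:proof_props}) and Lemma~\ref{A(t) as} (already proved above) carry all the difficulty. The only subtlety worth spelling out is the justification of pulling $\mathrm e^{\xi A(t)}$ out of the conditional expectation, which rests on the fact that the relocation mechanism (hence $B(t)$, through the indicators $\mathds 1_{i\prec i(t)}$ and the variables $F_i$ defined in~\eqref{eq:defF}) still has genuine randomness given $\bs L$, whereas $A(t)$, $i(t)$ and the weights $W_i$ from~\eqref{eq:defW} are purely functions of the run-lengths. Thus the proof is a short two-line computation plus an invocation of the two preceding results, with the matching of convergence modes (a.s.\ vs.\ in probability) being the only bookkeeping item.
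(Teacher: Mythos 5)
Your proposal is correct and follows essentially the same route as the paper's proof: factor out the $\bs L$-measurable term $\mathrm e^{\xi A(t)}$ from the conditional expectation, apply Proposition~\ref{phi for S(t)} to the $B(t)$ term and Lemma~\ref{A(t) as} to the $A(t)$ term, and match the modes of convergence under {\bf (A1a)} versus {\bf (A1b)}. (In fact you are slightly more careful than the paper's displayed computation, which omits the factor $\xi$ in front of $A(t)/s(t)$.)
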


\begin{proof} 
For all $\xi\in\RR$, using the fact that $S(t) = A(t) + B(t)$ where $B(t)$ is defined in Equation~\eqref{eq:defB}, we get
\[
\log \EE[\mathrm{e}^{\xi S(t)}\mid \bs L] = \log \EE[\mathrm{e}^{\xi(A(t)+B(t))}\mid \bs L].\]
Note that $A(t)$ is, by definition, $\bs L$-measurable, which implies
\[\frac1{s(t)}\log \EE[\mathrm{e}^{\xi S(t)}\mid \bs L] 
=\frac1{s(t)} \log \EE[\mathrm{e}^{\xi B(t)} \mid\bs L]
+\frac{A(t)}{s(t)}
\to \Lambda(\xi),
\]
as $t\to \infty$ by Proposition~\ref{phi for S(t)} and Lemma~\ref{A(t) as}. We also know by Lemma~\ref{A(t) as}, that the above convergence holds almost surely under assumption {\rm\bf (A1a)} and in probability under {\rm\bf (A1b)}.
This concludes the proof.
%
\end{proof}

We are now ready to prove the quenched large deviation principle for the monkey process:
\begin{proof}[Proof of Theorem~\ref{ThLarDevMW}] 
We aim at applying Theorem~\ref{ThGEas} 
to the sequence $(X(t))_{t\in\mathcal T}$ conditionally on~$\bs L$. 
We first prove that, for all $\zeta \in \RR^d$, almost surely,
\[\lim_{t\to \infty} \frac1{s(t)}\log \EE[ \mathrm{e}^{\zeta\cdot X(t)}\mid \bs L]
=\Lambda_X(\zeta).\]
Let $\varepsilon>0$. 
By Assumption {\bf (A2)}, 
there exists $t_\varepsilon\in\mathcal T$ such that for all $t\geq t_\varepsilon$, 
\begin{equation}\label{eq:up_bound_Z}
\EE[\mathrm{e}^{\zeta\cdot Z(t)}] \leq \mathrm{e}^{(\Lambda_Z(\zeta)+\varepsilon)t}.
\end{equation}
Since $X(t) =  Z(S(t))$ in distribution, with $Z$ and $S$ independent on the right-hand side, we get
\begin{equation}\label{eq:sum_t0}
\EE[\mathrm{e}^{\zeta\cdot X(t)}\mid \bs L]
= \EE[\mathrm{e}^{\zeta\cdot Z(S(t))}\mid \bs L]
=\EE\big[\EE [\mathrm{e}^{\zeta\cdot Z(S(t))} \mid S(t)]\mathds 1_{S(t)\geq t_\varepsilon}\mid\bs L\big]
+\EE\big[\mathrm{e}^{\zeta\cdot Z(S(t))}\mathds 1_{S(t)<t_\varepsilon}\mid\bs L\big].
\end{equation} 
By~\eqref{eq:up_bound_Z}, almost surely for all $t\geq 0$,
\[\EE [\mathrm{e}^{\zeta\cdot Z(S(t))} \mid S(t)]\mathds 1_{S(t)\geq t_\varepsilon}
\leq \mathrm{e}^{(\Lambda_Z(\zeta)+\varepsilon)S(t)}.\]
Taking the conditional expectation with respect to $\bs L$, we get
\[\EE[\mathrm{e}^{\zeta\cdot Z(S(t))}\mathds 1_{S(t)\geq t_\varepsilon}\mid \bs L] 
\leq \EE[ \mathrm{e}^{(\Lambda_Z(\zeta)+\varepsilon)S(t)} \mid \bs L ].\]
Using Proposition~\ref{prop:LDP_S}, 
we get that almost surely for all~$t$ large enough,
\begin{equation}\label{eq:largerthan_t0}
\frac1{s(t)}\log \EE[\mathrm e^{\zeta\cdot Z(S(t))}\mathds 1_{S(t)\geq t_\varepsilon}\mid \bs L] 
\leq \frac1{s(t)}\log \EE[\mathrm e^{(\Lambda_Z(\zeta)+\varepsilon)S(t)}\mid \bs L]
\leq \Lambda(\Lambda_Z(\zeta)+\varepsilon)+\varepsilon.
\end{equation}
Moreover, almost surely when $t\to+\infty$,
\[\mathbb E[\mathrm e^{\zeta\cdot Z(S(t))}\mathds 1_{S(t) < t_\varepsilon}\mid\bs L]
\leq \mathbb P(S(t) < t_\varepsilon\mid\bs L)
\max_{u\in [0, t_\varepsilon)} \mathbb E[\mathrm e^{\zeta\cdot Z(u)}]\to 0\]
The last limit holds because, conditionally on $\bs L$, $S(t) \to+\infty$ almost surely, 
and because $\max_{u\in [0, t_\varepsilon)} \mathbb E[\mathrm e^{\zeta\cdot Z(u)}]<+\infty$ by Assumption {\bf (A2)} (Equation~\eqref{eq:upper_bound_Z}). Together with Equations~\eqref{eq:sum_t0} and~\eqref{eq:largerthan_t0}, this gives that, 
almost surely for all $t$ large enough,
\ba\frac1{s(t)}\log \EE[\mathrm e^{\zeta\cdot X(t)}\mid \bs L] 
&\leq \frac1{s(t)}\log \big(\mathrm e^{s(t)(\Lambda(\Lambda_Z(\zeta)+\varepsilon)+\varepsilon)}+\varepsilon\big)\\
&= \Lambda(\Lambda_Z(\zeta)+\varepsilon)+\varepsilon 
+ \frac1{s(t)}\log\big(1+\varepsilon\mathrm e^{-s(t)(\Lambda(\Lambda_Z(\zeta)+\varepsilon)+\varepsilon)}\big).
\ea
Because, by definition, $\Lambda\geq 0$ (see Equation~\eqref{eq:def_Lambda}),
we get that, for all $\varepsilon>0$, for all $\zeta\in\mathbb R^d$, 
almost surely for all $t$ large enough,
\[\frac1{s(t)}\log \EE[\mathrm e^{\zeta\cdot X(t)}\mid \bs L] 
\leq \Lambda(\Lambda_Z(\zeta)+\varepsilon)+2\varepsilon.\]
Similarly, one can show that, for all $\varepsilon>0$, 
for all $\zeta\in\mathbb R^d$, 
almost surely for all $t$ large enough, 
\[\frac{1}{s(t)}\log \EE[\mathrm e^{\zeta\cdot X(t)}\mid \bs L] 
\geq \Lambda(\Lambda_Z(\zeta)-\varepsilon)-2\varepsilon.\]
Because, by Assumption {\bf (A3)}, $\Lambda$ is continuous, we get that, 
almost surely for all $\zeta\in\mathbb R^d$,
\begin{equation}\label{eq:cv_lambda_X}
\frac{1}{s(t)}\log \EE[\mathrm e^{\zeta\cdot X(t)}\mid \bs L] \to \Lambda(\Lambda_Z(\zeta)),
\end{equation}
and the monkey process thus satisfies Assumption~{\bf (H1)} of Theorem \ref{ThGEas}. 
{\cec Assumption {\bf (H2)} holds because of Assumption {\bf (A3)}(o).
To conclude the proof, it only remains to check that $\Lambda_X = \Lambda\circ \Lambda_Z$ satisfies Assumptions (a-c) of Theorem \ref{ThGEas}(iii). 
This is true because of Assumptions {\bf (A3)}(i-iii).}
\end{proof}

\section{Proof of Proposition~\ref{phi for S(t)}} 
\label{sec:proof_props}

The proof of Proposition~\ref{phi for S(t)} is divided in several technical lemmas.
The idea is the following: recall that
\[B(t) = \sum_{i=1}^{i(t)-1} F_i \mathds 1_{i\prec i(t)},\]
where $i(t)$ is the number of the run that contains~$t$ (see~\eqref{eq:def_i(t)}). 
Recall that the $F_i$'s are independent conditionally on $\bs L$ and their respective distributions are given by~\eqref{eq:defF}.
Also, by Proposition~\ref{prop:dobr}, conditionally on $\bs L$, the random variables $(\mathds 1_{i\prec i(t)})_{1\leq i<i(t)}$ are independent Bernoulli random variables of respective parameters $W_i/S_i$.
This means that
\[\mathbb E_{\bs L}[\mathrm e^{\xi B(t)}]
=\prod_{i=1}^{i(t)-1} \mathbb E_{\bs L}[\mathrm e^{\xi F_i \mathds 1_{i\prec i(t)}}]
=\prod_{i=1}^{i(t)-1} \bigg(1+\frac{W_i}{S_i}(\mathbb E_{\bs L}[\mathrm e^{\xi F_i}]-1)\bigg),
\]
where we use $\mathbb E_{\bs L}$ (and $\mathbb P_{\bs L}$) to denote $\mathbb E[\,\cdot\mid\bs L]$ (and $\mathbb P(\,\cdot\mid \bs L)$). Heuristically (this will be done rigorously in this section), since $W_i/S_i\to 0$ as $i\to+\infty$, we get that, for large $t$,
\[\log \mathbb E_{\bs L}[\mathrm e^{\xi B(t)}]
\approx \sum_{i=1}^{i(t)-1} \frac{W_i}{S_i}(\mathbb E_{\bs L}[\mathrm e^{\xi F_i}]-1)
\]
In Lemma~\ref{Le: AsymptoticExpe}, we give estimates for $\mathbb E_{\bs L}[\mathrm e^{\xi F_i}]$ for large index $i$. These estimates imply that
\[\log \mathbb E_{\bs L}[\mathrm e^{\xi B(t)}]
\approx \sum_{i=1}^{i(t)-1} \frac{\mu(T_{i-1})}{S_i}\Big(\frac{\mathrm e^{\xi L_i}-1-\xi L_i}\xi\Big).
\]
In Lemma~\ref{Lemma: AsympSeries}, we prove that one can replace the $T_i$'s in this sum by $i\mathbb EL$, thus $S_i$ becomes $\int_0^{i\mathbb EL} \mu$. 
Once this substitution is done, the sum in the last display becomes a sum of independent random variables to which strong laws of large numbers apply: we thus get
\[\log \mathbb E_{\bs L}[\mathrm e^{\xi B(t)}]
\approx \mathbb E\Big[\frac{\mathrm e^{\xi L}-1-\xi L}\xi\Big]
\sum_{i=1}^{i(t)-1} \frac{\mu((i-1)\mathbb EL)}{\int_0^{i\mathbb EL} \mu}.
\]
The last part of the proof is done in Lemma~\ref{Le: AsymptMoment}, where we show that
\[\sum_{i=1}^{i(t)-1} \frac{\mu((i-1)\mathbb EL)}{\int_0^{i\mathbb EL} \mu}
\approx 
\begin{cases}
\displaystyle\frac{s(t)}{\mathbb EL} & \text{ if }\mu=\mu_1\\[5pt]
\displaystyle\frac{s(t)}{\mathbb E[L]^{1-\delta}} & \text{ if }\mu=\mu_2,
\end{cases}\]
and this concludes the proof of Proposition~\ref{phi for S(t)}.

This section is devoted to making these heuristics rigorous:
In Section~\ref{sub:tech_lem}, 
we prove the three technical lemmas mentioned in the sketch of proof above, 
and in Section~\ref{sub:proof_prop33}, 
we show how they can be used to prove Proposition~\ref{phi for S(t)}.

\subsection{Technical lemmas}\label{sub:tech_lem}

\begin{lemma}\label{Le: AsymptoticExpe} Let $f: \RR \to \RR$ be a differentiable function. Then, \begin{linenomath}\begin{equation} \label{eq: Ri} W_i\EE_{\bs{L}}[f'(F_i)]=\mu(T_{i-1})R_i,\end{equation}\end{linenomath} 
and there exists a random integer $I_0$ and a constant $c>0$ (which is independent of~$f$)
such that, for all $i\geq I_0,$ 
\begin{equation} \label{ineq: Ri} 
\left| R_i-(f(L_i)-f(0))\right|\leq 
\begin{cases} 
\displaystyle\frac{c(f(L_i)-f(0))L_i(\log T_{i-1})^{\tilde{\alpha}-1}}{T_{i-1}} & \text{if} \; \mu=\mu_1 \\[12pt]
\displaystyle\frac{c(f(L_i)-f(0))L_i}{T_{i-1}^{1-\delta}} & \text{if} \; \mu=\mu_2,
\end{cases} \end{equation}
where $\tilde{\alpha}=\max\{1,\alpha\}$.
\end{lemma}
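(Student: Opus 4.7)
The plan is to prove the identity \eqref{eq: Ri} by a direct computation from the conditional density of $F_i$, and to establish the bound \eqref{ineq: Ri} via a uniform Taylor-type estimate on $\log\mu$ over the interval $[T_{i-1}, T_{i-1}+L_i]$. First, by \eqref{eq:defF}, the conditional density of $F_i$ given $\bs L$ is $x \mapsto \mu(T_{i-1}+x)/W_i$ on $[0, L_i]$, so integrating $f'$ against it gives
\[ W_i \mathbb E_{\bs L}[f'(F_i)] = \int_0^{L_i} f'(x)\,\mu(T_{i-1}+x)\,\mathrm d x = \mu(T_{i-1})\, R_i, \]
where $R_i := \int_0^{L_i} f'(x)\,\mu(T_{i-1}+x)/\mu(T_{i-1})\,\mathrm d x$, which is~\eqref{eq: Ri}.

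For the bound, I would use $f(L_i) - f(0) = \int_0^{L_i} f'(x)\,\mathrm d x$ to write
\[ R_i - (f(L_i)-f(0)) = \int_0^{L_i} f'(x)\left(\frac{\mu(T_{i-1}+x)}{\mu(T_{i-1})} - 1\right)\mathrm d x, \]
so that everything reduces to bounding $\sup_{x\in[0,L_i]}|\mu(T_{i-1}+x)/\mu(T_{i-1})-1|$. By the mean value theorem applied to $\log\mu$, this ratio equals $e^{x(\log\mu)'(\xi)}$ for some $\xi\in[T_{i-1},T_{i-1}+L_i]$. A direct calculation gives
\[ (\log\mu_1)'(u)=-\frac{1}{u}+\frac{\alpha-1}{u\log u}+\frac{\beta\alpha(\log u)^{\alpha-1}}{u}=O\Big(\frac{(\log u)^{\tilde\alpha-1}}{u}\Big), \]
and $(\log\mu_2)'(u)=(\delta-1)/u+\gamma\delta u^{\delta-1}=O(u^{\delta-1})$ as $u\to\infty$. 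Provided $L_i\cdot g(T_{i-1})\to 0$ almost surely, where $g$ denotes the relevant rate, the inequality $|e^y-1|\leq 2|y|$ for $|y|\leq 1$ yields $\sup_{x\in[0,L_i]}|\mu(T_{i-1}+x)/\mu(T_{i-1})-1|\leq c L_i g(T_{i-1})$ for $i$ large enough; pulling this out of the integral (and using $\int_0^{L_i}|f'(x)|\,\mathrm d x=|f(L_i)-f(0)|$ when $f'$ has constant sign, which covers the intended applications $f(x)=\mathrm e^{\xi x}$) gives \eqref{ineq: Ri}.

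The main obstacle is establishing that $L_i\cdot g(T_{i-1})\to 0$ almost surely, which simultaneously defines the random threshold $I_0$ and justifies linearising the exponential. Since $T_{i-1}/i\to\mathbb E L$ almost surely by the law of large numbers, this reduces to showing $L_i=o(i/(\log i)^{\tilde\alpha-1})$ in the $\mu_1$ case and $L_i=o(i^{1-\delta})$ in the $\mu_2$ case; both are much weaker than $L_i=o(\log i)$, which itself follows from a Borel--Cantelli argument using Markov's inequality exactly as in the proof of Lemma~\ref{A(t) as}, invoking assumption~{\rm\bf(A2)} that $\mathbb E[\mathrm e^{\xi L}]<\infty$ for every $\xi\in\RR$. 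The only remaining subtlety is verifying that the constant implicit in the $O(\cdot)$ for $(\log\mu_1)'$ is uniform over the admissible ranges of $\alpha$ and $\beta$; this is a short case analysis comparing the three terms in $(\log\mu_1)'$ depending on whether $\alpha\geq 1$, $\alpha<1$, and whether $\beta=0$, none of which poses a real difficulty.
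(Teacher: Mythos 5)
Your proof is correct, but it takes a genuinely different route from the paper's. You derive the identity \eqref{eq: Ri} exactly as the paper does, but for the error bound you use a single unified argument: write $R_i-(f(L_i)-f(0))=\int_0^{L_i}f'(x)\big(\mu(T_{i-1}+x)/\mu(T_{i-1})-1\big)\,\mathrm dx$, control the ratio via the mean value theorem applied to $\log\mu$, bound $(\log\mu_1)'(u)=\mathcal O\big((\log u)^{\tilde\alpha-1}/u\big)$ and $(\log\mu_2)'(u)=\mathcal O(u^{\delta-1})$, and linearise the exponential once $L_i$ times this rate is small, which you justify by the law of large numbers plus a Borel--Cantelli argument under {\bf(A2)} (the paper's proof cites {\bf(A1)} here, but the moment assumption is indeed what is used). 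The paper instead treats $\mu_1$ (with separate subcases $\alpha>1$ and $\alpha\leq1$) and $\mu_2$ by hand: it expands the ratio explicitly, obtains two-sided bounds from monotonicity of the factors in $u$, and tracks explicit constants $c_1,\dots,c_8$ defined as suprema over $[0,1]$. Your approach is shorter and handles all kernels and parameter ranges uniformly, with the absolute value giving the two-sided control for free; the paper's buys explicit constants and avoids any appeal to the derivative of $\log\mu$ along the whole interval. Two small points you should make explicit: (i) to replace $\sup_{\xi\in[T_{i-1},T_{i-1}+L_i]}|(\log\mu)'(\xi)|$ by $C\,g(T_{i-1})$ you need that the bounding rate $g$ is eventually decreasing (true for both kernels, and absorbed into the $\bs L$-measurable threshold $I_0$); (ii) your restriction to $f'$ of constant sign is not a defect relative to the paper --- the paper's monotonicity steps likewise require $f'\geq0$, and the stated bound (with $f(L_i)-f(0)$ unsigned on the right) only makes sense in that regime, which covers the applications $f(x)=\mathrm e^{\xi x}/\xi-x$ and $f(x)=\mathrm e^{2\xi x}/(2\xi)$.
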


\begin{proof}First note that, by defintion of $(F_i)_{i\geq 1}$ (see~\eqref{eq:defF}),
for all $i\geq 1$,
\[\EE_{\bs{L}}[f'(F_i)]=\int_0^{L_i} f'(u) \frac{\mu(T_{i-1}+u)}{W_i}\; \mathrm{d}u.\] Therefore, $W_i\EE_{\bs{L}}[f'(F_i)]=\mu(T_{i-1})R_i,$ where $R_i = \int_0^{L_i} f'(u)\frac{\mu(T_{i-1}+u)}{\mu(T_{i-1})}\; \mathrm{d}u.$

Using Assumption~{\rm\bf (A1)}, Markov's inequality, and Borel-Cantelli's lemma, 
one can prove that $L_i/i^{1-\delta}\to 0$ almost surely when $i\to \infty$,
for all $\delta \in (0,\nicefrac12]$.
Moreover, the law of large numbers implies that $T_i\sim i\EE[L]$, 
and thus $L_i/T_i^{1-\delta}\to 0$, almost surely when $i\to+\infty$. 
This also implies that $L_i/T_i$ converges to $0$ almost surely when $i\to+\infty$.
We now treat the cases $\mu = \mu_1$ and $\mu=\mu_2$ separately:

(1) If $\mu=\mu_1$, then \begin{linenomath}\begin{equation} \label{Ri m1} R_i=\int_0^{L_i} \frac{f'(u)}{1+\nicefrac{u}{T_{i-1}}} \left( 1+\frac{\log(1+\nicefrac{u}{T_{i-1}})}{\log T_{i-1}}\right)^{\alpha -1} \exp\left\lbrace \beta (\log T_{i-1})^{\alpha} \left[ \left( 1+\frac{\log(1+\nicefrac{u}{T_{i-1}})}{\log T_{i-1}} \right)^{\alpha} -1 \right] \right\rbrace \; \mathrm{d}u.\end{equation}\end{linenomath} 
We first assume that $\alpha >1$: 
Since the last two terms in the product in the integral of Equation~\eqref{Ri m1} are at least equal to~$1$, 
we have 
\[R_i \geq \int_0^{L_i} \frac{f'(u)}{1+\nicefrac{L_i}{T_{i-1}}}\; \mathrm{d}u 
=\frac{f(L_i)-f(0)}{1+\nicefrac{L_i}{T_{i-1}}}.\]
Since $L_i/T_{i-1}\to 0$ almost surely as $i\to+\infty$,
there exists a random integer $I_1$ such that, for all $i\geq I_1$, $0\leq L_i/T_{i-1}\leq 1$, hence
 \[R_i\geq (f(L_i)-f(0))(1-c_1L_i/T_{i-1}),\]
 where $c_1=\sup_{x\in [0,1]}\left\lbrace \frac{1}{x}\left(1-\frac{1}{1+x}\right) \right\rbrace.$

Similarly, since $u\leq L_i$, $1+u/T_{i-1}\geq 1$, 
and the last two factors in the integral of Equation~\eqref{Ri m1} are increasing in $u$, 
we have 
\[R_i\leq (f(L_i)-f(0))\left( 1+\frac{\log(1+\nicefrac{L_i}{T_{i-1}})}{\log T_{i-1}}\right)^{\alpha -1} \exp\left\lbrace \beta (\log T_{i-1})^{\alpha} \left[ \left( 1+\frac{\log(1+\nicefrac{L_i}{T_{i-1}})}{\log T_{i-1}} \right)^{\alpha} -1 \right] \right\rbrace.\]
Almost surely, there exists $I_2>I_1$ such that, for all $i\geq I_2$, $\log T_{i-1}\geq 1$. Therefore for all $i\geq I_2$, \[0\leq L_i/T_{i-1}\leq 1\quad \text{and} \quad \frac{L_i}{T_{i-1}\log T_{i-1}}\leq 1.\] Hence for 
\[c_2=\sup_{x\in [0,1]}\left\lbrace \frac{1}{x}\log(1+x)\right\rbrace, 
\,c_3=\sup_{x\in [0,1]}\left\lbrace \frac{1}{x}((1+c_2x)^{\alpha} -1)\right\rbrace,
\,\text{ and }
c_4=\sup_{x\in [0,1]}\left\lbrace \frac{\beta}{x}((1+c_2x)^{\alpha} -1)\right\rbrace,\] we have \begin{linenomath}\begin{align*}
R_i & \leq (f(L_i)-f(0))\left( 1+\frac{c_2L_i}{T_{i-1}\log T_{i-1}}\right)^{\alpha -1} \exp\left\lbrace \beta (\log T_{i-1})^{\alpha} \left[ \left( 1+\frac{c_2L_i}{T_{i-1}\log T_{i-1}} \right)^{\alpha} -1 \right] \right\rbrace \\
& \leq (f(L_i)-f(0))\left( 1+\frac{c_3L_i}{T_{i-1}\log T_{i-1}}\right) \exp\left\lbrace \frac{c_4L_i(\log T_{i-1})^{\alpha -1}}{T_{i-1}}   \right\rbrace.
\end{align*}\end{linenomath}
Almost surely there exists $I_3>I_2$ such that, for all $i\geq I_3$, $\frac{L_i(\log T_{i-1})^{\alpha -1}}{T_{i-1}}\leq 1$. Therefore, for all $i\geq I_3$, 
\[R_i \leq (f(L_i)-f(0))\left( 1+\frac{c_3L_i}{T_{i-1}\log T_{i-1}}\right)\left( 1+ \frac{c_5L_i(\log T_{i-1})^{\alpha -1}}{T_{i-1}} \right),\] where $c_5=\sup_{x\in [0,1]} \left\lbrace \frac{1}{x}(\mathrm e^{c_4x}-1)\right\rbrace$. 
Finally, for $c=c_3+c_5+c_3c_5$, we get 
\[R_i\leq (f(L_i)-f(0))\left( 1+ \frac{cL_i(\log T_{i-1})^{\alpha -1}}{T_{i-1}} \right),\]
which concludes the case when $\mu=\mu_1$ and $\alpha>1$.

We can prove the result in the same way when $\alpha\leq 1$.
Instead of calculating constants explicitly, we use the $\mathcal O$-notation in the following sense: we say that $v_i = \mathcal O(u_i)$ if there exists an $\bs L$-measurable random integer~$I$ and a deterministic constant $C$ such that 
\begin{equation}\label{eq:O}
|v_i|\leq Cu_i\text{ for all }i\geq I.
\end{equation}
With this notation, when $\alpha\leq 1$, for all $i\geq I_2$,
\begin{linenomath}\begin{align*}
R_i & \geq (f(L_i)-f(0))\cdot \frac{1}{1+\nicefrac{L_i}{T_{i-1}}}\left( 1+\frac{\log(1+\nicefrac{L_i}{T_{i-1}})}{T_{i-1}}\right)^{\alpha-1} \\
& = (f(L_i)-f(0))\bigg(1+\mathcal{O}\bigg(\frac{L_i}{T_{i-1}}\bigg)\bigg)\bigg(1+\mathcal{O}\bigg(\frac{L_i}{T_{i-1}\log T_{i-1}}\bigg)\bigg) \\
& = (f(L_i)-f(0))\bigg(1+\mathcal{O}\bigg(\frac{L_i}{T_{i-1}}\bigg)\bigg).
\end{align*}\end{linenomath}
Similarly,
\begin{linenomath}\begin{align*}
R_i & \leq (f(L_i)-f(0)) \exp\left\lbrace \beta (\log T_{i-1})^\alpha\left[ \left( 1+\frac{\log(1+\nicefrac{L_i}{T_{i-1}})}{\log T_{i-1}} \right)^{\alpha -1} \right] \right\rbrace \\
& \leq (f(L_i)-f(0)) \bigg(1+\mathcal{O}\bigg(\frac{L_i}{T_{i-1}(\log T_{i-1})^{\alpha -1}}\bigg)\bigg),
\end{align*}\end{linenomath}
which concludes the case when $\mu=\mu_1$ and $\alpha\leq 1$.

(2) If $\mu=\mu_2$, then \[R_i=\int_0^{L_i} f'(u) \left(1+\frac{u}{T_{i-1}}\right)^{\delta -1} \exp\left\lbrace \gamma T_{i-1}^{\delta} \left[ \left(1+\frac{u}{T_{i-1}}\right)^{\delta} -1 \right] \right\rbrace \mathrm{d}u.\]
Since $\delta<1$, $\gamma\geq 0$, and $0\leq u\leq L_i$, we have 
\[R_i\geq (f(L_i)-f(0))\left(1+\frac{L_i}{T_{i-1}}\right)^{\delta-1}.\]
Recall that, for all $i\geq I_1$, $0\leq \nicefrac{L_i}{T_{i-1}}\leq 1$. 
Hence, if $c_6=\sup_{x\in [0,1]}\left\lbrace \frac{1}{x}(1-(1+x)^{\delta-1} \right\rbrace$, then
\[R_i\geq (f(L_i)-f(0))\left( 1-c_6\frac{L_i}{T_i}\right).\]
For the upper bound, \[R_i\leq (f(L_i)-f(0))\exp\left\lbrace \gamma T_{i-1}^{\delta} \left[ \left(1+\frac{L_i}{T_{i-1}}\right)^{\delta} -1 \right] \right\rbrace.\]
For all $i\geq I_1$, $R_i\leq (f(L_i)-f(0)) \exp(\gamma c_7T_{i-1}^{\delta-1}L_i)$, where $c_7=\sup_{x\in [0,1]}\left\lbrace \frac{1}{x}((1+x)^{\delta}-1) \right\rbrace$. Since $T_{i-1}^{\delta-1}L_i\to 0$ a.s. when $i\to \infty$, we can find a random integer $I_3>I_1$ such that, for all $i\geq I_3$, $\gamma c_7 T_{i-1}^{\delta-1}L_i\leq 1$, thus \[R_i\leq (f(L_i)-f(0))\left(1+\frac{c_8L_i}{T_{i-1}^{1-\delta}}\right),\]
where $c_8=\sup_{x\in [0,1]} \left\lbrace \frac{1}{x}(\mathrm e^{\gamma c_7x}-1) \right\rbrace$.
\end{proof}

\begin{lemma}\label{Lemma: AsympSeries} Let $g:\RR \to \RR$ be a function such that $\EE[g(L)],\Var(g(L))<\infty$. Then, almost surely when $n\to \infty$, \[
\sum_{i=1}^n \frac{g(L_{i+1})(\log T_i)^b}{T_i} =\begin{cases}
\displaystyle\frac{\EE[g(L)]}{(b+1)\EE[L]}(\log n)^{b+1} +\mathcal{O}(1) & \text{if} \; b\neq -1 \\[8pt]
\displaystyle\frac{\EE[g(L)]}{\EE[L]}\log\log n +\mathcal{O}(1) & \text{if} \; b= -1 
\end{cases}
\]
and, for all $\ell>1$, 
\[\sum_{i=1}^n \frac{g(L_{i+1})(\log T_i)^b}{T_i^{\ell}}=\mathcal{O}(1).\]
Also, for $\delta\in (0,1/2]$, almost surely as $n\to+\infty$,
\[\delta\sum_{i=1}^n g(L_{i+1})T_i^{\delta -1}=\frac{\EE[g(L)]}{\EE[L]^{1-\delta}}n^{\delta}+o(\log n).\]
\end{lemma}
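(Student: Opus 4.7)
The plan is to handle the three asymptotic formulas in parallel using a common two-step strategy. First, I use the strong law of large numbers---together with the law of the iterated logarithm, which gives $T_i = i\,\mathbb E[L] + O(\sqrt{i\log\log i})$ almost surely---to replace $T_i$ by $i\,\mathbb E[L]$ inside each sum, absorbing the replacement error into the stated remainder. Second, I decompose $g(L_{i+1}) = \mathbb E[g(L)] + (g(L_{i+1}) - \mathbb E[g(L)])$: the deterministic part reduces to a classical integral comparison, while the centred fluctuation is a series of independent square-integrable variables controlled via Kolmogorov's almost-sure convergence theorem.

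For the first sum with $b\neq -1$: after the replacement, the deterministic part is $\tfrac{\mathbb E[g(L)]}{\mathbb E[L]}\sum_{i=1}^n (\log i)^b/i$, which equals $\tfrac{\mathbb E[g(L)]}{(b+1)\mathbb E[L]}(\log n)^{b+1} + \mathcal O(1)$ by comparison with $\int_1^n (\log x)^b/x\,\mathrm d x$; the case $b=-1$ gives the $\log\log n$ analogue identically. The centred series $\sum_{i\geq 1}(g(L_{i+1})-\mathbb E[g(L)])(\log i)^b/(i\,\mathbb E[L])$ has summable variances $\propto (\log i)^{2b}/i^2$ for every $b\in\mathbb R$, hence converges almost surely by Kolmogorov's theorem and is therefore $\mathcal O(1)$ a.s. The bound for $\ell>1$ follows immediately by Fubini: $\mathbb E\sum_i |g(L_{i+1})|(\log i)^b/T_i^\ell < \infty$, since $T_i\geq ci$ eventually and $\sum (\log i)^b/i^\ell$ converges.

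For the third sum, the same LIL estimate gives $T_i^{\delta-1} = (i\,\mathbb E[L])^{\delta-1}\bigl(1 + o(i^{-1/2}\sqrt{\log\log i})\bigr)$, so the replacement error is absorbed in the $o(\log n)$ remainder. The leading deterministic part $\delta\,\mathbb E[g(L)]\sum_{i=1}^n (i\,\mathbb E[L])^{\delta-1}$ equals $\mathbb E[g(L)]\,\mathbb E[L]^{\delta-1}n^\delta + \mathcal O(1)$ via the exact identity $\delta\int_1^n x^{\delta-1}\,\mathrm d x = n^\delta - 1$. The fluctuation $\delta\sum_{i=1}^n(g(L_{i+1})-\mathbb E[g(L)])T_i^{\delta-1}$ has partial-sum variance of order $\sum_{i\leq n} i^{2\delta-2}$: summable when $\delta<1/2$ (so the series converges almost surely by Kolmogorov), but only $\mathcal O(\log n)$ at the boundary $\delta=1/2$.

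The main obstacle is precisely this boundary case $\delta = 1/2$, where Kolmogorov's convergence theorem no longer delivers an almost-sure limit. To extract the required $o(\log n)$ control I would invoke the law of the iterated logarithm for partial sums of independent centred variables with finite variance (or, equivalently, combine Kolmogorov's maximal inequality on a geometrically spaced subsequence with Borel--Cantelli), yielding partial sums of size $O\bigl(\sqrt{\log n\,\log\log\log n}\bigr)$ almost surely, which is comfortably $o(\log n)$ as required.
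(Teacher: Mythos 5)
Your overall route coincides with the paper's: replace $T_i$ by $i\,\mathbb E[L]$ via the law of the iterated logarithm, split $g(L_{i+1})$ into its mean plus a centred part, handle the deterministic part by integral comparison and the centred part by Kolmogorov's theorem on series of independent variables (the paper phrases this as an $L^2$-bounded martingale). This correctly settles the first display, the $\ell>1$ bound, and the third display for $\delta<\nicefrac12$. Two points of hygiene: in the $\ell>1$ bound you cannot use ``$T_i\ge ci$ eventually'' inside the expectation, since the threshold index is random -- argue pathwise instead (almost surely $T_i\ge ci$ for $i\ge I(\omega)$, and $\sum_i |g(L_{i+1})|(\log i)^b i^{-\ell}$ is almost surely finite because its mean is); and at $\delta=\nicefrac12$ the term-by-term replacement error you invoke is $\sum_{i\le n}|g(L_{i+1})|\,i^{-1}\sqrt{\log\log i}$, which is of order $\log n\sqrt{\log\log n}$, so it is not literally ``absorbed in the $o(\log n)$ remainder''; this costs nothing where the lemma is used (only $o(n^{\delta})$ is needed downstream), but the claim should be weakened or argued with cancellation.

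The genuine gap is your treatment of the fluctuation at the boundary $\delta=\nicefrac12$, which is precisely the delicate case. The summands $D_i=(g(L_{i+1})-\mathbb E[g(L)])\,(i\,\mathbb E L)^{-1/2}$ are independent but not identically distributed, so Hartman--Wintner does not apply, and Kolmogorov's LIL requires the increments to be almost surely bounded by $o\big(s_n/\sqrt{\log\log s_n^2}\big)$, which fails here when only $\Var(g(L))<\infty$ is assumed and $s_n^2\asymp\log n$ grows so slowly; thus ``the LIL for independent centred variables with finite variance'' is not an off-the-shelf theorem you can cite, and proving it would need truncation plus exponential inequalities. Your elementary backup also fails quantitatively: writing $S_m=\sum_{i\le m}D_i$ and taking geometric blocks $n_k=\rho^k$, one has $\Var(S_{n_{k+1}})\asymp k$, so Kolmogorov's maximal inequality gives $\mathbb P\big(\max_{m\le n_{k+1}}|S_m|\ge\lambda_k\big)\lesssim k/\lambda_k^2$, and summability in $k$ forces $\lambda_k$ to be at least of order $k\sqrt{\log k}\asymp\log n_k\sqrt{\log\log n_k}$ -- in particular any choice with $\lambda_k=O(\log n_k)$ makes the bound non-summable -- so this route yields neither the claimed $O\big(\sqrt{\log n\,\log\log\log n}\big)$ nor even $o(\log n)$. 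The repair is cheap and is what the paper does: apply the Kolmogorov--Kronecker strong law with norming $a_n=\log n$; since $\sum_i\Var(g(L))\,i^{2\delta-2}(\log i)^{-2}\le\sum_i\Var(g(L))/(i(\log i)^2)<\infty$ for all $\delta\le\nicefrac12$, one gets $(\log n)^{-1}\sum_{i\le n}D_i\to0$ almost surely, covering $\delta<\nicefrac12$ and $\delta=\nicefrac12$ in one stroke (alternatively, blocks doubly exponential in $n$, i.e.\ $\log n_k=\rho^k$, make your maximal-inequality argument deliver $o(\log n)$).
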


\begin{proof} The law of the iterated logarithm implies that, almost surely, \[\sup_{n\to \infty} \frac{|T_n-mn|}{\sqrt{n\log n}}<\infty,\]
where $m=\EE[L]$.
Therefore, 
\begin{linenomath}\begin{align*}
\sum_{i=1}^n \frac{g(L_{i+1})(\log T_i)^b}{T_i} & = \sum_{i=1}^n \frac{g(L_{i+1})(\log (mi+\mathcal{O}(\sqrt{i\log i}))^b}{mi+\mathcal{O}(\sqrt{i\log i})} \\
&= \sum_{i=1}^n \frac{g(L_{i+1})(\log (mi))^b}{mi}\left(1+\mathcal{O}\left( \sqrt{\frac{\log i}{i}} \right)\right),
\end{align*}\end{linenomath}
where we use the $\mathcal O$-notation with the same meaning 
as in the proof of Lemma~\ref{Le: AsymptoticExpe} (see~\eqref{eq:O}). 
Therefore, almost surely as $n\to+\infty$,
\begin{linenomath}\begin{equation}\label{eq:marting1}
\sum_{i=1}^n \frac{g(L_{i+1})(\log T_i)^b}{T_i} = \sum_{i=1}^n \frac{g(L_{i+1})(\log (mi))^b}{mi}+\mathcal{O}\left( \sum_{i=1}^n \frac{g(L_{i+1})(\log (mi))^b\sqrt{\log i}}{mi^{3/2}} \right).
\end{equation}\end{linenomath}
We use martingale theory to prove that the first term in the right-hand side of Equation \eqref{eq:marting1} is almost surely equivalent to its expectation when $n\to \infty$.
We start by calculating its expectation: for all $n\geq 1$,
\[\EE\left[\sum_{i=1}^n \frac{g(L_{i+1})(\log (mi))^b}{mi} \right] = \EE[g(L)] \sum_{i=1}^n \frac{(\log(mi))^b}{mi}.\]
If $b=-1$, then, when $n\to \infty$, 
\[\sum_{i=1}^n \frac{(\log(mi))^b}{mi}
=\sum_{i=1}^n \frac{1}{mi\log(mi)}
=\frac{1}{m}\log\log n+\mathcal{O}(1).\]
If $b\neq -1$, then, when $n\to \infty$, 
\begin{linenomath}\begin{align*}
\sum_{i=1}^n \frac{(\log(mi))^b}{mi} & = \int_1^n \frac{(\log(mx))^b}{mx}\; \mathrm dx+\mathcal{O}(1)=\frac{1}{m}\int_1^{mn} \frac{(\log y)^b}{y}\; \mathrm dy +\mathcal{O}(1) \\
&= \frac{1}{m(b+1)}(\log(mn))^{b+1}+\mathcal{O}(1)
=\frac{1}{m(b+1)}(\log n)^{b+1} +\mathcal{O}(1).
\end{align*}\end{linenomath}
We thus get that, almost surely as $n\to+\infty$,
\begin{equation}\label{eq:exp_mart}
\EE\left[ \sum_{i=1}^n \frac{g(L_{i+1})(\log (mi))^b}{mi} \right]=\begin{cases}
\frac{\EE[g(L)]}{m(b+1)}(\log n)^{b+1} +\mathcal{O}(1) & \text{if} \; b\neq -1 \\[2ex]
\frac{\EE[g(L)]}{m}\log\log n +\mathcal{O}(1) & \text{if} \; b= -1.
\end{cases}
\end{equation}
For all $n\geq 1$, we let
\[M_n=\sum_{i=1}^n \frac{(g(L_{i+1})-\EE[g(L)])(\log (mi))^b}{mi}.\] 
Because the $L_i$'s are independent, $(M_n)_{n\geq 1}$ is a martingale. 
Furthermore, 
\[\Var(M_n)=\sum_{i=1}^n \frac{\Var(g(L))(\log(mi))^{2b}}{m^2i^2}
\leq \sum_{i=1}^{\infty} \frac{\Var(g(L))(\log(mi))^{2b}}{m^2i^2}<\infty,\]
which implies that $M_n$ is bounded in $L^2$ and thus 
converges almost surely to an almost-surely finite random variable. 
In other words, almost surely when $n\to +\infty$,
\begin{equation}\label{eq:first_term}
\sum_{i=1}^n \frac{g(L_{i+1})(\log(mi))^b}{mi} 
=\EE\left[\sum_{i=1}^n \frac{g(L_{i+1})(\log(mi))^b}{mi} \right]+\mathcal{O}(1)
=\begin{cases}
\frac{\EE[g(L)]}{m(b+1)}(\log n)^{b+1} +\mathcal{O}(1) & \text{if} \; b\neq -1 \\[2ex]
\frac{\EE[g(L)]}{m}\log\log n +\mathcal{O}(1) & \text{if} \; b= -1,
\end{cases}
\end{equation}
where we have used Equation~\eqref{eq:exp_mart} in the second equality.
The second term in the right-hand side of Equation~\eqref{eq:marting1} can be analysed using similar arguments, and we get that almost surely as $n\to \infty$, 
\[\sum_{i=1}^n \frac{g(L_{i+1})(\log (mi))^b\sqrt{\log i}}{mi^{3/2}}
= \sum_{i=1}^n \frac{\EE[g(L)](\log(mi))^b\sqrt{\log i}}{mi^{3/2}} +\mathcal{O}(1)
=\mathcal{O}(1).\]
Together with Equations~\eqref{eq:marting1} and~\eqref{eq:first_term}, 
this concludes the proof of the first statement of Lemma~\ref{Lemma: AsympSeries}.

The other two statements can be proved using similar arguments: 
we only detail the proof of the third statement since deriving the $\mathcal O(\log n)$ 
necessitates some additional argument when $\delta = \nicefrac12$.
Applying the law of the iterated logarithm, we get that
\[\delta\sum_{i=1}^n g(L_{i+1})T_i^{\delta -1}
= \delta \sum_{i=1}^n g(L_{i+1})(mi)^{\delta-1}
+ \mathcal O\left(\sum_{i=1}^n g(L_{i+1}) i^{\delta-\nicefrac32}\sqrt{\log i}\right),\]
almost surely when $n\to+\infty$.
The sum in this last $\mathcal O(\,\cdot\,)$ 
can be treated as above: there exists a martingale $(\Delta_n)_{n\geq 0}$, bounded in $L^2$ and thus almost surely convergent, such that
\[\sum_{i=1}^n g(L_{i+1}) i^{\delta-\nicefrac32}\sqrt{\log i}
=\sum_{i=1}^n \mathbb Eg(L) i^{\delta-\nicefrac32}\sqrt{\log i} + \Delta_n 
= \mathcal O(1),\]
almost surely when $n\to+\infty$.
This implies that, almost surely as $n\to+\infty$,
\[\delta\sum_{i=1}^n g(L_{i+1})T_i^{\delta -1}
=\delta \sum_{i=1}^n g(L_{i+1})(mi)^{\delta-1}+\mathcal O(1).\]
Note that the sum on the right-hand side is a sum of independent random variables. 
We can thus apply the strong law of large numbers (see, e.g., \cite[Th.\ IX.3.12]{petrov}), which states that, if $(D_n)_{n\geq 1}$ is a sequence of independent random variables with mean~0 and if there exists $a_n\to+\infty$ such that $\sum_{n\geq 1} a_n^{-2}\mathrm{Var}(D_n)<+\infty$, then
$a_n^{-1} \sum_{i=1}^n D_n \to 0$ almost surely when $n\to\infty$.
Since,
\[\sum_{i\geq 1} \frac{\mathrm{Var}(g(L))}{i^{2(1-\delta)}(\log i)^2}
\leq \sum_{i\geq 1} \frac{\mathrm{Var}(g(L))}{i(\log i)^2}
<+\infty,\]
this law of large numbers implies that, almost surely when $n\to+\infty$,
\[\delta\sum_{i=1}^n g(L_{i+1})(mi)^{\delta-1} 
= m^{\delta-1}\mathbb Eg(L_{i+1})\sum_{i=1}^n \delta i^{\delta-1} + o(\log n)
=m^{\delta -1} \mathbb Eg(L_{i+1}) n^\delta + o(\log n),
\]
as claimed.
\end{proof}

\begin{lemma}\label{Le: AsymptMoment} Let $f: \RR\to \RR$ be a differentiable function such that $\EE[f(L)]<\infty$ and  $\Var(f(L))<\infty$. Then, almost surely when $n\to+\infty$, 
\[
\sum_{i=1}^n \frac{\EE_{\bs{L}}[f'(F_i)]W_i}{S_i}=\begin{cases}
\frac{\EE[f(L)-f(0)]}{\EE[L]}s(n)+\mathcal O(1) & \text{if} \; \mu=\mu_1 \\[5pt]
\frac{\EE[f(L)-f(0)]}{\EE[L]^{1-\delta}}s(n)+\mathcal O(1) & \text{if} \; \mu=\mu_2 \text{ and $\delta<\nicefrac12$}\\[5pt]
\frac{\EE[f(L)-f(0)]}{\EE[L]^{1-\delta}}s(n)+\mathcal O(\log n) & \text{if} \; \mu=\mu_2 \text{ and $\delta=\nicefrac12$},
\end{cases}
\]
and, \[
\sum_{i=1}^n \frac{\EE_{\bs{L}}[f'(F_i)]W_i^2}{S_i^2} = \begin{cases}
\mathcal{O}(\log n) & \;\text{if } \mu=\mu_2 \text{ and }\delta=\frac{1}{2} \\[5pt]
\mathcal{O}(1) & \; \text{otherwise}.
\end{cases}
\]
\end{lemma}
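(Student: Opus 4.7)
The plan is to reduce both sums to averages of the types handled by the two preceding lemmas. By Lemma~\ref{Le: AsymptoticExpe}, $W_i\mathbb E_{\bs L}[f'(F_i)] = \mu(T_{i-1})R_i$ with $R_i = f(L_i)-f(0)+\eta_i$, and $|\eta_i|$ is bounded by the right-hand side of~\eqref{ineq: Ri}. I would substitute this into the first sum and split
\begin{linenomath}\begin{equation*}
\sum_{i=1}^n \frac{\mathbb E_{\bs L}[f'(F_i)]W_i}{S_i}
= \sum_{i=1}^n \frac{\mu(T_{i-1})(f(L_i)-f(0))}{S_i}
+ \sum_{i=1}^n \frac{\mu(T_{i-1})\eta_i}{S_i},
\end{equation*}\end{linenomath}
and analyse each piece separately.

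For the main term, I would use the closed-form expression $S_i = \int_0^{T_i}\mu$ (with $\int\mu_1 = \beta^{-1}\exp(\beta(\log x)^\alpha)$ or $(\log x)^\alpha$, and $\int\mu_2 = \exp(\gamma x^\delta)$) together with $T_i/T_{i-1}\to 1$ a.s., to derive the equivalents $\mu_1(T_{i-1})/S_i \sim \beta\alpha(\log T_{i-1})^{\alpha-1}/T_{i-1}$ when $\beta\neq 0$, $\sim \alpha/(T_{i-1}\log T_{i-1})$ when $\beta=0$, and $\mu_2(T_{i-1})/S_i \sim \gamma\delta T_{i-1}^{\delta-1}$. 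In each case one checks that the correction to these equivalents is $\mathcal O(L_i/T_{i-1})$ or $\mathcal O(L_i(\log T_{i-1})^{\tilde\alpha-1}/T_{i-1})$ and so can be absorbed into a second error sum of the same flavour as the one already present. Substituting and applying Lemma~\ref{Lemma: AsympSeries} to $g(L)=f(L)-f(0)$ with $b=\alpha-1$ (first statement), $b=-1$ (first statement, second line), or the third statement for $\mu=\mu_2$, produces exactly the leading orders $s(n)$ with the constants claimed in the statement.

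For the error term, I would plug the bound from~\eqref{ineq: Ri} directly. The summand is $C(f(L_i)-f(0))L_i$ multiplied by $(\log T_{i-1})^{\alpha+\tilde\alpha-2}/T_{i-1}^2$ for $\mu_1$ or by $T_{i-1}^{2\delta-2}$ for $\mu_2$; in every case permitted by {\bf (A1)} the denominator exponent exceeds $1$, so the second statement of Lemma~\ref{Lemma: AsympSeries} (applied with $g(L)=(f(L)-f(0))L$, which has finite expectation and variance since $\mathbb E\mathrm e^{\xi L}<\infty$ for all $\xi$) delivers $\mathcal O(1)$. For the second sum in the lemma, I would first argue $W_i = \int_{T_{i-1}}^{T_i}\mu \sim \mu(T_{i-1})L_i$ (by the same reasoning as Lemma~\ref{Le: AsymptoticExpe} applied to $f'\equiv 1$), which reduces the sum to $\sum_i (f(L_i)-f(0))L_i(\mu(T_{i-1})/S_i)^2$. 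Using the equivalents above, this becomes a sum of the form handled by Lemma~\ref{Lemma: AsympSeries}: the denominator exponent is $\ell=2$ for $\mu_1$, $\ell=2(1-\delta)$ for $\mu_2$, yielding $\mathcal O(1)$ by the second statement, except in the borderline case $\mu=\mu_2,\ \delta=\nicefrac12$ where $\ell=1$, and the first statement with $b=0$ then gives the advertised $\mathcal O(\log n)$.

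The main obstacle is bookkeeping: one must verify that, in each of the four regimes ($\mu_1$ with $\beta\neq 0$, $\mu_1$ with $\beta=0$, $\mu_2$ with $\delta<\nicefrac12$, $\mu_2$ with $\delta=\nicefrac12$), the two layers of approximation (the $\eta_i$-error from Lemma~\ref{Le: AsymptoticExpe} and the error in replacing $\mu(T_{i-1})/S_i$ and $W_i$ by their equivalents) decay fast enough to fit the second statement of Lemma~\ref{Lemma: AsympSeries}. The borderline $\mu_2$ with $\delta=\nicefrac12$ is the most delicate, since there the second sum is genuinely of order $\log n$ and the first sum's error must actually be shown to be $o(\log n)$, for which one needs the sharper form of the martingale argument already contained in the proof of Lemma~\ref{Lemma: AsympSeries}.
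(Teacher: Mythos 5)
Your route is essentially the paper's: use Lemma~\ref{Le: AsymptoticExpe} to write $W_i\EE_{\bs L}[f'(F_i)]=\mu(T_{i-1})R_i$ with $R_i=f(L_i)-f(0)$ up to the error in \eqref{ineq: Ri}, substitute the explicit form of $S_i=\int_0^{T_i}\mu$, and conclude with Lemma~\ref{Lemma: AsympSeries}; your shortcut $W_i\approx\mu(T_{i-1})L_i$ (Lemma~\ref{Le: AsymptoticExpe} applied with $f(x)=x$) is equivalent to the paper's direct expansion of $W_i=S_i-S_{i-1}$ via estimates such as \eqref{eq:log_alpha}, so the second sum is treated the same way in both arguments.

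Two bookkeeping corrections. First, your claim that ``in every case permitted by {\bf (A1)} the denominator exponent exceeds $1$'' fails precisely for $\mu=\mu_2$, $\delta=\nicefrac12$: there the error summand coming from \eqref{ineq: Ri} is of order $L_if(L_i)T_{i-1}^{2\delta-2}=L_if(L_i)T_{i-1}^{-1}$, so the second statement of Lemma~\ref{Lemma: AsympSeries} does not apply and one must instead invoke the first statement with $b=0$, which gives $\mathcal O(\log n)$ rather than $\mathcal O(1)$ --- this is exactly why the lemma allows an $\mathcal O(\log n)$ error in that case. Note also that $\mathcal O(\log n)$, not the $o(\log n)$ you worry about at the end, is all that is required there, so the borderline case needs no sharper martingale input than the other cases. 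Second, your (correct) equivalent $\mu_1(T_{i-1})/S_i\sim\alpha\beta(\log T_{i-1})^{\alpha-1}/T_{i-1}$ retains the factor $\beta$ coming from $\int\mu_1=\beta^{-1}\mathrm e^{\beta(\log x)^{\alpha}}$, so with $s(n)=(\log n)^{\alpha}$ as in \eqref{eq:st} the leading constant you actually obtain for $\mu=\mu_1$, $\beta\neq0$ is $\beta\,\EE[f(L)-f(0)]/\EE[L]$; the paper's Step~1.2 works with $S_i$ replaced by $\mathrm e^{\beta(\log T_i)^{\alpha}}$, i.e.\ it normalizes this factor away, so you should state explicitly which normalization of $s$ you are matching before asserting that the constants ``are exactly those claimed''.
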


\begin{proof} The proof is done in three steps: in Step 1.1, we look at the case when $\mu = \mu_1$ and $\beta=0$; in Step 1.2, we look at $\mu = \mu_1$ and $\beta\neq 0$; and in Step 2, we look at $\mu = \mu_2$. Without loss of generality, we assume in all three steps that $f(0) = 0$.

{\bf Step 1.1:} If $\mu=\mu_1$ and $\beta=0$, let $\tilde{\alpha}=\max\{1,\alpha \}$ and $b\in \{1,2\}$. By Lemma \ref{Le: AsymptoticExpe}, almost surely as $n\to+\infty$, 
\begin{linenomath}\begin{align*}
\sum_{i=1}^n \frac{\EE_{\bs{L}}[f'(F_i)]W_i^b}{S_i^b} 
=& \sum_{i=1}^n \frac{\alpha f(L_i) (\log T_{i-1})^{\alpha -1}W_i^{b-1}}{T_{i-1}S_i^b}
+\mathcal{O}\left(\sum_{i=1}^n \frac{\alpha L_i f(L_i)(\log T_{i-1})^{\alpha+\tilde{\alpha} -2}W_i^{b-1}}{T_{i-1}^2S_i^b} \right) \\
=& \sum_{i=1}^n  \frac{\alpha f(L_i) (\log T_{i-1})^{\alpha -1}((\log T_i)^{\alpha}-(\log T_{i-1})^{\alpha})^{b-1}}{T_{i-1}(\log T_i)^{b\alpha}}\\
& + \mathcal{O}\left( \sum_{i=1}^n \frac{\alpha L_i f(L_i)(\log T_{i-1})^{\alpha+\tilde{\alpha} -2}((\log T_i)^{\alpha}-(\log T_{i-1})^{\alpha})^{b-1}}{T_{i-1}^2(\log T_i)^{b\alpha }} \right)
\end{align*}\end{linenomath} 
Since for all $i$ large enough, $L_i/T_{i-1}\in (0,1]$,
we have 
\[\left(\frac{\log T_{i-1}}{\log T_i}\right)^{\alpha}=1+\mathcal{O}\left( \frac{L_i}{T_{i-1}\log T_{i-1}} \right),\]
and thus
\begin{equation}\label{eq:log_alpha}
(\log T_i)^{\alpha}-(\log T_{i-1})^{\alpha}=(\log T_i)^\alpha\left( 1-\left( \frac{\log T_{i-1}}{\log T_i}\right)^{\alpha}\right)=\mathcal{O}\left( \frac{L_i(\log T_i)^{\alpha}}{T_{i-1}\log T_{i-1}} \right),
\end{equation}
where the $\mathcal{O}$-notation is used as in~\eqref{eq:O}.
Therefore, almost surely as $n\to+\infty$,
\begin{linenomath}\begin{align}
\sum_{i=1}^n \frac{\EE_{\bs{L}}[f'(F_i)]W_i}{S_i} 
&= \sum_{i=1}^n \frac{\alpha f(L_i)}{T_{i-1}\log T_{i-1}} 
+\mathcal{O}\left( \sum_{i=1}^n \frac{L_if(L_i)}{T^2_{i-1}(\log T_{i-1})^2} \right)
 +\mathcal{O}\left(\sum_{i=1}^n \frac{L_i f(L_i)(\log T_{i-1})^{\tilde{\alpha} -2}}{T_{i-1}^2} \right)\notag \\
& = \sum_{i=1}^n \frac{\alpha f(L_i)}{T_{i-1}\log T_{i-1}} 
+ \mathcal{O}\left( \sum_{i=1}^n \frac{ L_i f(L_i)(\log T_{i-1})^{\tilde{\alpha} -2}}{T_{i-1}^2} \right),\label{eq:cec1}
\end{align}\end{linenomath}
and 
\begin{equation}\label{eq:cec2}
\sum_{i=1}^n \frac{\EE_{\bs{L}}[f'(F_i)]W_i^2}{S_i^2} 
= \mathcal{O}\left( \sum_{i=1}^n \frac{L_i f(L_i)}{T^2_{i-1}(\log T_{i-1})^2} \right).
\end{equation}
Applying Lemma \ref{Lemma: AsympSeries} to the two terms on the right-hand side of~\eqref{eq:cec1}, and to the righ-hand side of~\eqref{eq:cec2}, 
we get that, almost surely when $n\to +\infty$,
\[\sum_{i=1}^n \frac{\EE_{\bs{L}}[f'(F_i)]W_i}{S_i}
=\frac{\EE f(L)}{\EE L} \alpha \log \log n +\mathcal{O}(1),\]
and 
\[\sum_{i=1}^n \frac{\EE_{\bs{L}}[f'(F_i)]W_i^2}{S_i^2} 
= \mathcal{O}(1),\]
as claimed.

{\bf Step 1.2:} If $\mu=\mu_1$, $b\neq 0$, and $b\in \{1,2\}$, by Lemma \ref{Le: AsymptoticExpe},
almost surely as $n\to+\infty$,
\begin{linenomath}\begin{align}
&\sum_{i=1}^n \frac{\EE_{\bs{L}}[f'(F_i)]W_i^b}{S_i^b} \notag\\
&= \sum_{i=1}^n \frac{\alpha f(L_i) (\log T_{i-1})^{\alpha -1}\mathrm e^{\beta (\log T_{i-1})^a}W_i^{b-1}}{T_{i-1}S_i^b} + \mathcal{O}\left( \sum_{i=1}^n \frac{\alpha L_i f(L_i)(\log T_{i-1})^{\alpha-1}\mathrm e^{\beta (\log T_{i-1})^a}W_i^{b-1}}{T_{i-1}^2S_i^b} \right)\label{eq:ABn}
\end{align}\end{linenomath}
We let $A_n$ and $B_n$ denote respectively the first term and the term inside the $\mathcal O(\,\cdot\,)$ in the right-hand side of this last equation.
We have
\begin{linenomath}\begin{align}
A_n
&= \sum_{i=1}^n \frac{\alpha f(L_i) (\log T_{i-1})^{\alpha -1}\mathrm e^{\beta (\log T_{i-1})^a}\left( \mathrm e^{\beta (\log T_i)^\alpha}-\mathrm e^{\beta (\log T _{i-1})^\alpha}\right)^{b-1}}{T_{i-1}\mathrm e^{b\beta (\log T_i)^\alpha}}\notag\\
&= \sum_{i=1}^n \frac{\alpha f(L_i)(\log T_{i-1})^{\alpha -1}\mathrm e^{\beta ((\log T_{i-1})^a-(\log T_i)^{\alpha})}\left( \mathrm e^{\beta (\log T_i)^\alpha}-\mathrm e^{\beta (\log T_{i-1})^{\alpha}}\right)^{b-1}}{T_{i-1}\mathrm e^{(b-1)\beta (\log T_i)^\alpha}},\label{eq:An}
\end{align}\end{linenomath}
and
\begin{linenomath}
\begin{align}
B_n
&=\sum_{i=1}^n \frac{\alpha L_i f(L_i)(\log T_{i-1})^{\alpha-1}\mathrm e^{\beta (\log T_{i-1})^a}\left( \mathrm e^{\beta (\log T_i)^\alpha}-\mathrm e^{\beta (\log T _{i-1})^\alpha}\right)^{b-1}}{T_{i-1}^2\mathrm e^{b\beta (\log T_i)^\alpha}}\notag\\
&=\sum_{i=1}^n \frac{\alpha L_i f(L_i)(\log T_{i-1})^{\alpha-1}\mathrm e^{\beta ((\log T_{i-1})^a-(\log T_i)^{\alpha})}\left(\mathrm e^{\beta (\log T_i)^\alpha}-\mathrm e^{\beta (\log T _{i-1})^\alpha}\right)^{b-1}}{T_{i-1}^2\mathrm e^{(b-1)\beta (\log T_i)^\alpha}}\label{eq:Bn}
\end{align}\end{linenomath}
By~\eqref{eq:log_alpha},
\[\mathrm e^{\beta((\log T_{i-1})^{\alpha}-(\log T_i)^{\alpha})}
=1+\mathcal{O}\left( \frac{L_i(\log T_{i-1})^{\alpha}}{T_{i-1}} \right),\] 
and thus 
\[\mathrm e^{\beta (\log T_i)^{\alpha}}-\mathrm e^{\beta (\log T_{i-1})^{\alpha}}
=\mathcal{O}\left( \frac{L_i(\log T_{i-1})^{\alpha}\mathrm e^{\beta(\log T_i)^{\alpha}}}{T_{i-1}} \right),\] where the $\mathcal{O}$-notation is used as in~\eqref{eq:O}. 
This implies in particular that, by~\eqref{eq:An} and~\eqref{eq:Bn}, if $b=1$,
\begin{linenomath}\begin{align*}
A_n 
&= \sum_{i=1}^n \frac{\alpha f(L_i) (\log T_{i-1})^{\alpha-1}}{T_{i-1}}
+\mathcal O\left(\sum_{i=1}^n \frac{\alpha L_i f(L_i) (\log T_{i-1})^{2\alpha-1}}{T^2_{i-1}}\right)\\
B_n
&= \mathcal O\left(\sum_{i=1}^n \frac{L_i f(L_i) (\log T_{i-1})^{\alpha-1}}{T^2_{i-1}}\right),
\end{align*}\end{linenomath}
almost surely as $n\to+\infty$.
Using Lemma \ref{Lemma: AsympSeries}, we thus get that, almost surely as $n\to+\infty$,
\[A_n = \frac{\EE f(L)}{\EE L} (\log n)^\alpha +\mathcal{O}(1),\\
\quad\text{ and }\quad
B_n= \mathcal O(1).\]
Using these estimates in~\eqref{eq:ABn}, we get
\[\sum_{i=1}^n \frac{\EE_{\bs{L}}[f'(F_i)]W_i}{S_i} 
= A_n+\mathcal O(B_n) 
= \frac{\EE f(L)}{\EE L} (\log n)^\alpha +\mathcal{O}(1),\]
almost surely when $n\to+\infty$, as claimed. 
Similarly, taking $b=2$ in~\eqref{eq:An} and~\eqref{eq:Bn}, we get
\[\sum_{i=1}^n \frac{\EE_{\bs{L}}[f'(F_i)]W_i^2}{S_i^2} 
= \mathcal O(A_n)
= \mathcal{O}\left(\sum_{i=1}^n \frac{L_i  f(L_i)(\log T_{i-1})^{2\alpha -1}}{T^2_{i-1}}\right),\]
which, by Lemma \ref{Lemma: AsympSeries}, implies
\[\sum_{i=1}^n \frac{\EE_{\bs{L}}[f'(F_i)]W_i^2}{S_i^2} = \mathcal{O}(1),\]
as claimed.

{\bf Step 2:} If $\mu=\mu_2$, we proceed similarly. 
By Lemma \ref{Le: AsymptoticExpe}, for $b\in\{1,2\}$, 
\begin{linenomath}\begin{align}
\sum_{i=1}^n \frac{\EE_{\bs{L}}[f'(F_i)]W_i^b}{S_i^b}
&= \sum_{i=1}^n \frac{\gamma  \delta f(L_i) T^{\delta -1}_{i-1}\mathrm e^{\gamma T^{\delta}_{i-1}}W_i^{b-1}}{S_i^b}
+\mathcal{O}\left( \sum_{i=1}^n\frac{L_i f(L_i) T^{2(\delta -1)}_{i-1}\mathrm e^{\gamma T^{\delta}_{i-1}}W_i^{b-1}}{S_i^b} \right) \notag\\
&= \sum_{i=1}^n \frac{\gamma \delta f(L_i) T^{\delta -1}_{i-1}\mathrm e^{\gamma T^{\delta}_{i-1}}\left( \mathrm e^{\gamma T_i^{\delta}}-\mathrm e^{\gamma T_{i-1}^{\delta}}\right)^{b-1}}{\mathrm e^{b\gamma T_i^{\delta}}} 
+\mathcal{O}\left( \sum_{i=1}^n \frac{L_i f(L_i) T^{2(\delta -1)}_{i-1}\mathrm e^{\gamma T^{\delta}_{i-1}}}{\mathrm e^{b\gamma T_i^{\delta}}} \right)\label{eq:(2)}
\end{align}\end{linenomath}
almost surely as $n\to+\infty$.
Since $L_i/T_{i-1}\in [0,1]$ for $i$ large enough, and $T_{i-1}^{\delta}-T_i^{\delta}=T_{i-1}^{\delta}\big(1-(1+\frac{L_i}{T_{i-1}})^{\delta}\big)$, 
we get that 
\[T_{i-1}^{\delta}-T_i^{\delta}=\mathcal{O}( T_{i-1}^{\delta -1}L_i ).\]
Therefore, we also get
\[\mathrm e^{\gamma(T_{i-1}^{\delta}-T_i^{\delta})}
=1-\mathcal{O}(T_{i-1}^{\delta -1}L_i),\quad \text{and} \quad \mathrm e^{\gamma T_{i-1}^{\delta}}-\mathrm e^{\gamma T_i^{\delta}}=\mathcal{O}\left( T_{i-1}^{\delta -1}L_i\mathrm e^{\gamma T_i^{\delta}}\right),\]
which, together with~\eqref{eq:(2)}, imply
\[\sum_{i=1}^n \frac{\EE_{\bs{L}}[f'(F_i)]W_i}{S_i} 
= \sum_{i=1}^n f(L_i) \gamma \delta T^{\delta -1}_{i-1}
+\mathcal{O}\left( \sum_{i=1}^n L_i f(L_i) T^{2(\delta -1)}_{i-1} \right)\] 
and \[\sum_{i=1}^n \frac{\EE_{\bs{L}}[f'(F_i)]W_i^2}{S_i^2} 
= \mathcal{O}\left(\sum_{i=1}^n L_i f(L_i)T^{2\delta -2}_{i-1} \right).\]
By Lemma \ref{Lemma: AsympSeries}, almost surely as $n\to+\infty$,
\[\sum_{i=1}^n \frac{\EE_{\bs{L}}[f_{\xi}'(F_i)]W_i}{S_i}=\frac{\EE f(L)}{\EE[L]^{1-\delta}}\gamma n^{\delta} +\mathcal{O}(\log n),\]
and \[
\sum_{i=1}^n \frac{\EE_{\bs{L}}[f_{\xi}'(F_i)]W_i^2}{S_i^2} = \begin{cases}
\mathcal{O}(1) & \text{if} \; \delta<\frac{1}{2} \\[12pt]
\mathcal{O}(\log n) & \text{if} \; \delta=\frac{1}{2},
\end{cases}
\]
as claimed.
\end{proof}

\subsection{Proof of Proposition \ref{phi for S(t)}}\label{sub:proof_prop33}
Conditionally on $\bs L$, 
for all $n\geq 1$, $(F_i)_{i\geq 1}$ and $(\mathds 1_{i\prec n})_{1\leq i< n}$ are two independent sequences of independent random variables; we thus get that, for all $\xi\geq0$,
\[
\log \EE_{\bs{L}}\Big[\mathrm e^{\xi\sum_{i=1}^{n-1} F_i\mathds 1_{i\prec n}} \Big] 
 = \log \Bigg( \prod_{i=1}^{n-1} \EE_{\bs{L}}\big[\mathrm e^{\xi F_i\mathds 1_{i\prec n}} \big] \Bigg) 
= \sum_{i=1}^{n-1} \log \EE_{\bs{L}}\big[ \mathrm e^{\xi F_i\mathds 1_{i\prec n}} \big].\]
Since, conditionally on $\bs L$, 
for all $1\leq i<n$, $\mathds 1_{i\prec n}$ is Bernoulli-distributed of parameter $W_i/S_i$ (see Proposition~\ref{prop:dobr}), we get
\begin{equation}\label{eq:step1_B(t)}
\log \EE_{\bs{L}}\Big[\mathrm e^{\xi\sum_{i=1}^{n-1} F_i\mathds 1_{i\prec n}} \Big]
= \sum_{i=1}^{n-1} \log \left(1+ \frac{W_i}{S_i}\big(\EE_{\bs{L}}[\mathrm e^{\xi F_i}]-1\big)\right)
\leq \sum_{i=1}^{n-1} \frac{W_i}{S_i}\big(\EE_{\bs{L}}[\mathrm e^{\xi F_i}]-1\big) 
+R(n),
\end{equation}
where
\[|R(n)|\leq \sum_{i=1}^{n-1} \left(\frac{W_i}{S_i}\big(\EE_{\bs{L}}[\mathrm e^{\xi F_i}]-1\big)\right)^{\!2}\leq \sum_{i=1}^{n-1} \frac{W^2_i\EE_{\bs{L}}[\mathrm e^{2\xi F_i}]}{S^2_i}
,\]
because for all $x\geq 0$, $x-x^2\leq \log(1+x)\leq x$, and because $x\mapsto x^2$ is convex.
Applying Lemma \ref{Le: AsymptMoment} for $f(x)=\frac{\mathrm e^{\xi x}}{\xi}-x$, 
we get that, for all $\xi\geq 0$,
\begin{equation}\label{eq:form_with_i(t)}
\sum_{i=1}^{n-1} \frac{W_i}{S_i}\big(\EE_{\bs{L}}[\mathrm e^{\xi F_i}]-1\big) 
=\begin{cases}
\displaystyle\frac{\EE[\mathrm e^{\xi L}-1-\xi L]}{\xi\EE L}\, s(n)+\mathcal O(1) 
& \text{if } \mu=\mu_1 \\[8pt]
\displaystyle \frac{\EE[\mathrm e^{\xi L}-1-\xi L]}{\xi \EE[L]^{1-\delta}}\, s(n)
+\mathcal O(\log n)
& \text{if } \mu=\mu_2,
\end{cases}
\end{equation}
almost surely when $n\to+\infty$. 
Recall that, almost surely when $t\to+\infty$, $i(t) \sim \nicefrac t{\mathbb EL}$ 
and thus $\log(i(t)) \sim \log t$ almost surely when $t\to+\infty$. 
Therefore, almost surely when $t$ tends to infinity,
\[s(i(t)) = \begin{cases}
\alpha \log\log t + o(1) & \text{ if }\mu=\mu_1\text{ and }\beta=0,\\
s(t) + \mathcal O\big(s(t)^{1-\nicefrac1\alpha}\big)& \text{ if }\mu=\mu_1\text{ and }\beta\neq0,\\
s(t)\mathbb E[L]^{-\delta} +o(s(t))& \text{ if }\mu=\mu_2.
\end{cases}\]
Therefore, by Equation~\eqref{eq:form_with_i(t)},
\[\sum_{i=1}^{i(t)-1} \frac{W_i}{S_i}\big(\EE_{\bs{L}}[\mathrm e^{\xi F_i}]-1\big) 
=\frac{\EE[\mathrm e^{\xi L}-1-\xi L]}{\xi\EE L}\, s(t) 
+\begin{cases}
\displaystyle \mathcal O(1) 
& \text{if } \mu=\mu_1 \text{ and }\beta=0,\\
\mathcal O\big(s(t)^{1-\nicefrac1\alpha}\big) 
& \text{if } \mu=\mu_1 \text{ and }\beta\neq0,\\
o(s(t))
& \text{if } \mu=\mu_2.
\end{cases}
\]
Using again Lemma~\ref{Le: AsymptMoment} but for $f(x)=\frac{\mathrm e^{2\xi x}}{2\xi}$, we get that, almost surely when $t\to+\infty$,
\[|R(t)|\leq \begin{cases}
\mathcal O(1) 
& \text{if } \mu=\mu_1 \\
\mathcal O(\log t)
& \text{if } \mu=\mu_2,
\end{cases}
\]
Thus, in total, almost surely when $t\to+\infty$,
\[\log \EE_{\bs L}
\Big[\mathrm e^{\xi\sum_{i=1}^{i(t)-1} F_i\mathds 1_{i\prec i(t)}}\Big]
=\frac{\EE[\mathrm e^{\xi L}-1-\xi L]}{\xi\EE L} \, s(t)
+\begin{cases}
\mathcal O(1) & \text{ if } \mu=\mu_1\text{ and }\beta=0,\\
\mathcal O(s(t)^{1-\nicefrac1\alpha}) 
& \text{ if } \mu=\mu_1\text{ and }\beta\neq0,\\
o(s(t)) & \text{ if } \mu=\mu_2,
\end{cases}
\]
which concludes the proof.

\bibliographystyle{plain}   
\bibliography{Citations}

\end{document}